\documentclass[a4paper,12pt]{article}

\usepackage{color}
\usepackage[pdftex]{graphicx}
\usepackage{amsmath,amssymb,amsthm}
\usepackage{wrapfig}
\usepackage{ifpdf}


\textwidth 6.5in

\textheight 8.4in

\hoffset -2.0cm

\voffset -1.5cm

\sloppy

\frenchspacing
\righthyphenmin=2

\newtheorem{theorem}{Theorem}[section]
\newtheorem{cor}{Corollary}[section]
\newtheorem{lemma}{Lemma}[section]
\newtheorem{prop}{Proposition}[section]

\newtheorem{df}{Definition}[section]

\newcommand{\conv}{\mathop{\rm conv}\nolimits}

\newcommand{\cov}{\mathop{\rm cov}\nolimits}

\newcommand{\ep}{\mathop{\rm EP}\nolimits}

\newcommand{\vrt}{\mathop{\rm Vert}\nolimits}
\newcommand{\st}{\mathop{\rm St}\nolimits}
\newcommand{\bd}{\mathop{\rm Bd}\nolimits}

\DeclareMathOperator{\dg2}{deg_2}


\title {Homotopy invariants of covers and KKM type lemmas}

\author {Oleg R. Musin\thanks{The research was carried out at the IITP RAS at the expense of the Russian Foundation for Sciences (project № 14-50-00150).}}

\begin{document}

	\ifpdf \DeclareGraphicsExtensions{.pdf, .jpg, .tif, .mps} \else
	\DeclareGraphicsExtensions{.eps, .jpg, .mps} \fi	
	
\date{}
\maketitle

\begin{abstract} 
Given any (open or closed) cover of a space $T$ we associate certain homotopy classes of maps from $T$ to $n$--spheres. 
These homotopy invariants can then be considered as obstructions for extending covers of a subspace $A \subset X$ to a cover of all of $X$. 
We are using these obstructions to obtain generalizations of the classic KKM (Knaster--Kuratowski--Mazurkiewicz) and Sperner lemmas. 
In particular, we show that in the case when $A$ is a $k$--sphere and $X$ is a $(k+1)$--disk there exist KKM type lemmas for covers by $n+2$ sets if and only if the homotopy group $\pi_{k}({\Bbb S}^{n})\ne0$. 
\end{abstract}

\medskip

\noindent {\bf Keywords:}  KKM lemma, Sperner lemma, homotopy class, degree of mappings

\medskip 

\medskip 

Throughout this paper we will consider only normal topological spaces, all simplicial complexes will be finite, all manifolds with be both compact and PL,  ${\Bbb S}^{n}$ will denote the $n$--dimensional unit sphere, and ${\Bbb B}^{n}$ will denote the $n$--dimensional unit disk. 
We shall denote the set of homotopy classes of continuous maps from X to Y by $[X,Y]$.

\section{Homotopy invariants of covers}

First we consider labelings (colorings) of simplicial complexes. 
Denote by  $\vrt(K)$ the vertex set of a simplicial complex $K$. 
(It is also referred to as the  0--skeleton, $K^0$.) 
Let $$ L:\vrt(K)\to\{0,1,\ldots,m\} $$ be a labeling of vertices of $K$. 

Denote by $\Delta^m$ an $m$--dimensional simplex with vertices $v_0,\ldots,v_m$. 
Let 
$$f_L(u):=v_\ell, \; \mbox{ where } \; u\in\vrt(K), \, \text{ and } \ell=L(u). $$
Since $f_L$ is defined for all of the vertices of $K$, it induces a simplicial mapping $f_L:|K|\to |\Delta^m|$. This map is unique up to homeomorphism. 

Note that if any simplex in $K$ has at most $m$ distinct labels, then $f_L$ is a map from $|K|$ to $\partial|\Delta^m|\cong {\Bbb S}^{m-1}$. 

\begin{df} 
For a simplicial complex $K$ and a labeling $L:\vrt(K)\to\{0,1,\ldots,m\}$ such that $K$ has no simplices with $m+1$ distinct labels we denote by $[L]$ the homotopy class $[f_L]$ in $[|K|,{\Bbb S}^{m-1}]$. 
\end{df}

\noindent{\bf Example 1.1.} Let $K$ be a triangulation of ${\Bbb S}^k$ and   $L:\vrt(K)\to\{0,1,\ldots,m\}$ be a labeling such that $K$ has no simplices with $m+1$ distinct labels. Then $[L]\in\pi_k({\Bbb S}^{m-1})$. 

In the case $k=m-1$ we have $\pi_k({\Bbb S}^{m-1})={\Bbb Z}$ and 
$$[L]=\deg(f_L)\in{\Bbb Z}.$$
(Here by $\deg(f)$ we denote the degree of a continuous map $f$ from  ${\Bbb S}^n$  to itself.)

For instance, let $L$ be a {\it Sperner labeling} of a triangulation $K$ of $\partial\Delta^m=u_0u_1\ldots u_m$. The rules of this labeling are:\\
(i) The vertices of $\Delta^m$ are colored with different colors, i. e. $L(u_i)=i$ for $0\le i \le m$.\\
(ii)   Vertices of $K$ located on any $n$-dimensional subface of the large simplex
$u_{i_0}u_{i_1}\ldots u_{i_{n}}$ are colored only with the colors $i_0,i_1,\ldots,i_{n}.$ 
\\ Then $[L]=\deg(f_L)=1$ in $[{\Bbb S}^{m-1},{\Bbb S}^{m-1}]={\Bbb Z}$. 

\medskip

\noindent{\bf Example 1.2.}  Madahar and Sarkaria \cite{MS} considered a simplicial  map
 $\tau:{\Bbb S}_{12}^3\to{\Bbb S}_4^2$ from a 12-vertex 3-sphere ${\Bbb S}_{12}^3$ onto the 4-vertex 2--sphere ${\Bbb S}_4^2$ (tetrahedron) with vertices $v_0v_1v_2v_3$. 
 Actually, $\tau$ is a minimal vertices simplicial map of the Hopf invariant one. 

For $u\in\vrt({\Bbb S}_{12}^3)$, let
$L_\tau(u):=i,  \mbox{ where }  \tau(u)=v_i.$ 
Then $[L_\tau]=1\in\pi_3({\Bbb S}^2)={\Bbb Z}$. 

\medskip

Let $K$ be a simplicial complex. 
Denote by $\st(u)$ the open star of a vertex $u\in\vrt(K)$. 
In other words, $\st(u)$ is $|S|\setminus |B|$, where $S$  is the set of all simplices in $K$ that contain $u$, and $B$ is the set of all simplices in $S$ that does contain $u$.  

Let $$ L:\vrt(K)\to\{0,1,\ldots,m\} $$ be a labeling of vertices of $K$. 
There is a natural open cover of $|K|$ $$ \mathcal U_L(K)=\{U_0(K),\ldots,U_m(K)\},$$  where 
$$U_\ell(K):=\bigcup\limits_{u\in W_\ell}{\st(u)}, \; \; \text{ and } W_\ell:=\{u\in\vrt(K): L(u)=\ell\}. $$

So with any labeling $L$ we associate a cover $\mathcal U_L(K).$ 
Now we extend Definition 1.1 to covers. 

Let $\mathcal U=\{U_0,\ldots,U_m\}$ be an open finite cover of a  space $T$. 
If $N(\mathcal U)$ is its nerve, then there is a one-to-one correspondence between canonical maps $c:T\to |N(\mathcal U)|$ and partitions of unity $\Phi$ subordinate to $\mathcal U$. 
Moreover, any two canonical maps $T\to |N(\mathcal U)|$ are homotopic. 

Since the nerve $N(\mathcal U)$ is a subcomplex of the simplex $\Delta^m$, we have an embedding $\alpha:|N(\mathcal U)|\to |\Delta^m|$. 
In the case when the intersection of all of the $U_i$ is empty, i. e.  $N(\mathcal U)$ does not contain an $m$--cell, we have
$$\alpha:|N(\mathcal U)|\to \partial|\Delta^m|\cong {\Bbb S}^{m-1}.$$ 
If 
$$\rho_{\mathcal U,c}:=\alpha\circ c,$$ 
then a homotopy class  $[\rho_{\mathcal U,c}]$ in $[T,{\Bbb S}^{m-1}]$ does not depend on the canonical map  $c:T\to |N(\mathcal U)|$.

\begin{df}  
Let $\mathcal U=\{U_0,\ldots,U_m\}$ be an open finite cover of a  space $T$ such that the intersection of all of the $U_i$ is empty. 
Denote by 
$[\mathcal U]$ the homotopy class  $[\rho_{\mathcal U,c}]$ in $[T,{\Bbb S}^{m-1}]$. 
\end{df}

\noindent{\bf Remark.} It is clear that 
$$[\mathcal U_L(K)] = [L] \mbox{ in } [|K|,{\Bbb S}^{m-1}]. $$

\begin{theorem} 
Let $T$ be a space and $h$ be a homotopy class in $[T,{\Bbb S}^{m-1}]$. 
Then there is an open cover $\mathcal U=\{U_0,\ldots,U_m\}$ such that $[\mathcal U]=h$. 
	
If $T$ is a simplicial complex, then there is a triangulation $K$ of $T$ and a labeling $L:\vrt(K)\to\{0,1,\ldots,m\}$ with  $[\mathcal U_L(K)]=h$.  
\end{theorem}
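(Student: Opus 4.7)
The plan is to exploit the identification $\partial|\Delta^m|\cong {\Bbb S}^{m-1}$ directly: pick any representative $f\colon T\to\partial|\Delta^m|$ of the class $h$, and write $f(x)=\sum_{\ell=0}^{m}\phi_\ell(x)\,v_\ell$ in barycentric coordinates with respect to the vertices $v_0,\ldots,v_m$ of $\Delta^m$. The functions $\phi_\ell\colon T\to[0,1]$ are continuous, nonnegative, and sum to $1$; moreover, since $f(x)$ lies in $\partial|\Delta^m|$, at every $x\in T$ at least one coordinate $\phi_\ell(x)$ vanishes. I would then set
\[
U_\ell:=\phi_\ell^{-1}((0,1])=f^{-1}(\st(v_\ell)),\qquad \ell=0,\ldots,m,
\]
so that $\mathcal U=\{U_0,\ldots,U_m\}$ is an open cover of $T$ with $\bigcap_\ell U_\ell=\emptyset$, which is exactly the setup of Definition 1.2.

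To identify the homotopy class, observe that by construction $\Phi=(\phi_0,\ldots,\phi_m)$ is a partition of unity subordinate to $\mathcal U$, so the associated canonical map $c\colon T\to|N(\mathcal U)|$ is $c(x)=\sum_\ell\phi_\ell(x)\,v_\ell$. Composing with the inclusion $\alpha\colon|N(\mathcal U)|\hookrightarrow\partial|\Delta^m|$ yields $\rho_{\mathcal U,c}(x)=\sum_\ell\phi_\ell(x)\,v_\ell=f(x)$, whence $[\mathcal U]=[\rho_{\mathcal U,c}]=[f]=h$. This proves the first assertion.

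For the second part, assume $T=|T_0|$ for a finite simplicial complex $T_0$. I would apply the simplicial approximation theorem to $f\colon|T_0|\to|\partial\Delta^m|$: after passing to a sufficiently fine iterated barycentric subdivision $K$ of $T_0$, one obtains a simplicial map $g\colon K\to\partial\Delta^m$ (with the codomain carrying its natural triangulation on vertex set $\{v_0,\ldots,v_m\}$) such that $g\simeq f$. Define $L\colon\vrt(K)\to\{0,\ldots,m\}$ by $L(u)=\ell$ whenever $g(u)=v_\ell$; then $f_L=g$ by uniqueness of the simplicial extension. Because $g$ takes values in $\partial|\Delta^m|$, no simplex of $K$ can carry all $m+1$ distinct labels, for otherwise $g$ would map it onto the full $|\Delta^m|$; hence $L$ satisfies the hypothesis of Definition 1.1. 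Combining the Remark $[\mathcal U_L(K)]=[L]$ with $[L]=[f_L]=[g]=[f]=h$ finishes the proof.

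The only real subtlety is verifying that $\rho_{\mathcal U,c}$ coincides \emph{on the nose} with the original $f$; this is what dictates the choice of $\phi_\ell$ as the $v_\ell$-barycentric coordinate of $f(x)$ rather than an arbitrary partition of unity. Everything else reduces to standard machinery (simplicial approximation, and the Remark identifying $[\mathcal U_L(K)]$ with $[L]$).
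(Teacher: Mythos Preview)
Your argument is correct and follows essentially the same route as the paper: the paper also pulls back the open-star cover $\mathcal U_\Lambda(\Delta^m)=\{\st(v_0),\ldots,\st(v_m)\}$ along a representative $f$ of $h$, and then invokes simplicial approximation for the second part. Your version is in fact more explicit, since you spell out why $[\mathcal U]=h$ by exhibiting the barycentric coordinates of $f$ as a partition of unity making $\rho_{\mathcal U,c}=f$, whereas the paper leaves this as ``easy to see.''
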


\begin{proof} 
Let $\Lambda:\vrt(\Delta^m)\to\{0,1,\ldots,m\}$ be a labeling of $\Delta^m$ with vertices $v_0,\ldots,v_m$ such that $\Lambda(v_\ell)=\ell$ for all $\ell$. 
Then we have a cover $\mathcal U_\Lambda(\Delta^m)$. 
	
Let $f:T\to{\Bbb S}^{m-1}$ be a continuous map with $[f]=h$ and 
$$
U_\ell:=f^{-1}(U_\ell(\Delta^m)), \; \ell=0,\ldots,m.
$$ 
It is easy to see that $[\mathcal U]=h$. 

If $T$ is a simplicial complex, then by the simplicial approximation theorem there is a simplicial subdivision (triangulation) $K$ and a simplicial map $g:K\to\Delta^m$ such that $g$ is homotopic to $f$. 
For all $v\in\vrt(K)$, let
$$L(v):=\Lambda(g(v)).$$
Then $[\mathcal U_L(K)]=h$.  
\end{proof}

Let us define the class $[\mathcal U]$ more explicitly. 
Let $\Phi=\{\varphi_0,\ldots,\varphi_m\}$ be a partition of unity subordinate to $\mathcal U$ and for all $x\in T$
$$
\rho_{\mathcal U,\Phi}(x):=\sum\limits_{i=0}^m{\varphi_i(x)v_i}, 
$$
where $v_0,\ldots,v_m$ are vertices of an $m$--dimensional simplex $V$ considered as vectors in ${\Bbb R}^{m}$. 
Then $\rho_{\mathcal U,\Phi}$ is a continuous map from $T$ to $\partial V={\Bbb S}^{m-1}$. 
It is clear that 
 $$[\rho_{\mathcal U,\Phi}]=[\mathcal U] \mbox{ in } [T,{\Bbb S}^{m-1}]. $$ 

\medskip

Now we extend this definition. Let $V:=\{v_0,\ldots, v_m\}$ be any set of points (vectors) in ${\Bbb R}^{n+1}$. 
As above 
$$\rho_{\mathcal U,\Phi,V}(x):=\sum\limits_{i=0}^m{\varphi_i(x)v_i}. $$

Suppose a point $p\in {\Bbb R}^{n+1}$ lies outside of the image $\rho_{\mathcal U,\Phi,V}(T)$. 
For all $x\in T$, let
$$f_{\mathcal U,\Phi,V,p}(x):=\frac{\rho_{\mathcal U,\Phi,V}(x)-p}{||\rho_{\mathcal U,\Phi,V}(x)-p||}. $$
Then $f_{\mathcal U,\Phi,V,p}$ is a continuous map from $T$ to ${\Bbb S}^n$. 

\begin{lemma} 
For given \, $\mathcal U, V$ and $p$ any two  partitions of unity subordinate to $\mathcal U$ define the same homotopy class $[f_{\mathcal U,V,p}]$ in $[T,{\Bbb S}^n]$.
\end{lemma}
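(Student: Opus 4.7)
The plan is a straight-line homotopy in the space of partitions of unity subordinate to $\mathcal U$. Let $\Phi^0 = \{\varphi_i^0\}$ and $\Phi^1 = \{\varphi_i^1\}$ be two such partitions satisfying the well-definedness hypothesis $p \notin \rho_{\mathcal U, \Phi^k, V}(T)$ for $k = 0, 1$. For $t \in [0,1]$, I set $\varphi_i^t := (1-t)\varphi_i^0 + t\varphi_i^1$ and $\Phi^t := \{\varphi_i^t\}$. The first step is to verify that $\Phi^t$ is again a partition of unity subordinate to $\mathcal U$: each $\varphi_i^t$ is continuous and non-negative, the functions sum pointwise to $1$, and $\operatorname{supp}(\varphi_i^t) \subset \operatorname{supp}(\varphi_i^0) \cup \operatorname{supp}(\varphi_i^1) \subset U_i$.

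Writing $\rho_t := \rho_{\mathcal U, \Phi^t, V}$, linearity in the $\varphi_i$ gives $\rho_t(x) = (1-t)\rho_0(x) + t\rho_1(x)$. The candidate homotopy between $f_0 := f_{\mathcal U, \Phi^0, V, p}$ and $f_1 := f_{\mathcal U, \Phi^1, V, p}$ is
$$F(x,t) := \frac{\rho_t(x) - p}{\|\rho_t(x) - p\|}, \qquad (x,t) \in T \times [0,1],$$
a continuous map into ${\Bbb S}^n$ satisfying $F(\cdot, 0) = f_0$ and $F(\cdot, 1) = f_1$, provided it is well defined.

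The main obstacle, and where I expect the real content to lie, is verifying $\rho_t(x) \neq p$ for every $(x,t)$. For fixed $x$, both $\rho_0(x)$ and $\rho_1(x)$ lie in the convex hull $\conv\{v_i : x \in U_i\}$, hence so does the full segment $[\rho_0(x), \rho_1(x)]$; what must be ruled out is $p$ sitting strictly between the two endpoints while coinciding with neither of them. The cleanest route is to observe that the image of every $\rho_t$ lies in the $V$-realization $|N(\mathcal U)|_V \subset {\Bbb R}^{n+1}$ of the nerve (obtained by placing the vertex corresponding to $U_i$ at $v_i$ and extending affinely over simplices), and that $p \notin \rho_\Phi(T)$ holds for every partition $\Phi$ subordinate to $\mathcal U$ precisely when $p \notin |N(\mathcal U)|_V$. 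Under this natural strengthening of the hypothesis the homotopy $F$ is automatically well defined; in the general case one reduces to the geometric claim that $f_0(x)$ and $f_1(x)$ are never antipodal, in which case the same interpolation can be performed directly on the sphere via $G(x,t) := ((1-t)f_0(x) + t f_1(x))/\|(1-t)f_0(x) + t f_1(x)\|$.
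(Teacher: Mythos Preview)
Your approach is exactly the paper's: the linear interpolation $\Theta(t)=(1-t)\Phi+t\Psi$ of partitions of unity, pushed forward to a homotopy of the maps $f_{\mathcal U,\Phi,V,p}$. The paper's one-line proof does not pause over well-definedness, and you are right to flag it.

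The point you isolate --- that $\rho_{\Phi^t}(x)$ always lies in the $V$-image $g(|N(\mathcal U)|)$ of the nerve, so that $p\notin g(|N(\mathcal U)|)$ makes $F$ well defined --- is precisely the content of the paper's \emph{next} lemma, which asserts that $\rho_{\mathcal U,\Phi,V}(T)$ is independent of $\Phi$ and equals $g(|N(\mathcal U)|)$. With that fact in hand the hypothesis $p\notin\rho_{\mathcal U,\Phi^0,V}(T)$ is not weaker than $p\notin g(|N(\mathcal U)|)$; they coincide. So what you call a ``natural strengthening'' is the actual hypothesis, and your homotopy $F$ goes through as written.

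Your fallback in ``the general case'' via non-antipodality does not add anything: $f_0(x)$ and $f_1(x)$ are antipodal precisely when $p$ lies on the segment $[\rho_0(x),\rho_1(x)]$, which is the very obstruction you are trying to rule out. That route therefore reduces to the same nerve argument, not an independent one; you can drop the last sentence without loss.
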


\begin{proof} 
A linear homotopy $\Theta(t)=(1-t)\Phi+t\Psi$ of two partitions of unity $\Phi$ and $\Psi$ induces a homotopy between the maps $f_{\mathcal U,\Phi,V,p}$ and $f_{\mathcal U,\Psi,V,p}$. 
\end{proof}

\begin{lemma} 
For  any two  partitions of unity $\Phi$ and $\Psi$ subordinate to $\mathcal U$, the image $\rho_{\mathcal U,\Phi,V}(T)$ coincides with the image  $\rho_{\mathcal U,\Psi,V}(T)$ in ${\Bbb R}^{n+1}$. 
\end{lemma}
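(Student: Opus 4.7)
The approach is to mirror the linear-homotopy device used in the proof of Lemma~1.1, and then to establish the set equality by proving both inclusions $\rho_{\mathcal U,\Phi,V}(T)\subseteq\rho_{\mathcal U,\Psi,V}(T)$ and its reverse; by symmetry only one direction is needed. The key observation is that the interpolant $\Theta(t)=(1-t)\Phi+t\Psi$, $t\in[0,1]$, is itself a partition of unity subordinate to $\mathcal U$: non-negativity and the sum condition are preserved under convex combinations, and for each $i$ one has $\mathrm{supp}((1-t)\varphi_i+t\psi_i)\subseteq\mathrm{supp}(\varphi_i)\cup\mathrm{supp}(\psi_i)\subseteq U_i$.

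Given $y=\sum_i\varphi_i(x_0)v_i\in\rho_{\mathcal U,\Phi,V}(T)$, put $I=\{i:\varphi_i(x_0)>0\}$, so that $x_0\in\bigcap_{i\in I}U_i$ and $y\in\conv\{v_i:i\in I\}$. To produce $x_1\in T$ with $\rho_{\mathcal U,\Psi,V}(x_1)=y$, I would track the preimage family
$$F_t=\rho_{\mathcal U,\Theta(t),V}^{-1}(y)\subseteq T \qquad (t\in[0,1]).$$
Since $\rho_{\mathcal U,\Theta(t),V}(x)=(1-t)\rho_{\mathcal U,\Phi,V}(x)+t\,\rho_{\mathcal U,\Psi,V}(x)$, one has $x\in F_t$ iff $y$ lies on the segment joining $\rho_{\mathcal U,\Phi,V}(x)$ to $\rho_{\mathcal U,\Psi,V}(x)$ in $\Bbb R^{n+1}$. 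At $t=0$ this preimage contains $x_0$, and a continuity/path-lifting argument should aim to produce some $x_1\in F_1$, which is exactly the required point.

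The step I expect to be the main obstacle is keeping $F_t$ non-empty throughout $[0,1]$: a priori nothing prevents the fibres of $\rho_{\mathcal U,\Phi,V}$ and $\rho_{\mathcal U,\Psi,V}$ over $y$ from being disjoint, so $F_t$ might shrink and disappear before $t=1$. The crux should be a careful use of the convex structure of the weight simplex $\Delta^m$, combined with the fact that $\mathrm{supp}(\varphi_i)$ and $\mathrm{supp}(\psi_i)$ both lie in the common open set $U_i$, to construct a continuous selection inside $F_t$. Once both inclusions are verified, the images coincide, and as a byproduct any $p\notin\rho_{\mathcal U,\Phi,V}(T)$ is automatically absent from every intermediate image along $\Theta(t)$, which is precisely the fact implicitly needed to make the linear homotopy in Lemma~1.1 well-defined.
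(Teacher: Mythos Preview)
Your proposal is incomplete, and the gap you yourself flag is genuine: the path-lifting scheme cannot be closed without already knowing the result. Each $\Theta(t)$ is again a partition of unity subordinate to $\mathcal U$, so \emph{if} the lemma holds then $\rho_{\mathcal U,\Theta(t),V}(T)$ is the same set for every $t$ and hence $F_t\ne\emptyset$ throughout. Conversely, non-emptiness of $F_1$ is exactly the inclusion you want. Thus tracking $F_t$ from $t=0$ gives you no leverage: a general continuous map enjoys no path-lifting property, and non-empty fibres can perfectly well collapse in a limit. The linear homotopy is the right tool for Lemma~1.1 because homotopies preserve homotopy classes; it is the wrong tool here because homotopies do not preserve images.

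The paper's argument is entirely different and much shorter: it factors through the nerve. Send the vertex $U_i$ of $N(\mathcal U)$ to $v_i$ and extend affinely over simplices to obtain a piecewise-linear map $g:|N(\mathcal U)|\to\conv(V)\subset\Bbb R^{n+1}$. For any partition of unity $\Phi$ the associated canonical map $c_\Phi:T\to|N(\mathcal U)|$ satisfies $\rho_{\mathcal U,\Phi,V}=g\circ c_\Phi$, and therefore
\[
\rho_{\mathcal U,\Phi,V}(T)\;=\;g\bigl(c_\Phi(T)\bigr)\;=\;g(|N(\mathcal U)|),
\]
a set in which $\Phi$ no longer appears. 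The dependence on the partition of unity is absorbed entirely into the canonical map, and the nerve itself is an invariant of the cover $\mathcal U$ alone; no comparison between $\Phi$ and $\Psi$ is ever made. This is the idea you are missing.
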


\begin{proof} 
Consider the nerve $N(\mathcal U)$ with vertices $U_i$. 
If we set  $g(U_i):=v_i$, then we have a piece-wise linear map $g:|N(\mathcal U)|\to H$, where $H:=\conv(V)$ is the convex hull of $V$ in ${\Bbb R}^{n+1}$. 
Then for any partition of unity $\Phi$ we have $\rho_{\mathcal U,\phi,V}:=g\circ c$, where $c:T\to |N(\mathcal U)|$ is the canonical map corresponding to $\Phi$. Thus, $\rho_{\mathcal U,\Phi,V}(T)=g(|N(\mathcal U)|)$ and  does not depend on $\Phi$. 
\end{proof}

\noindent {\bf Notation:} $P_{\mathcal U,V}(T):={\Bbb R}^{n+1}\setminus \rho_{\mathcal U,\Phi,V}(T)$.

\medskip

Note that the map $f_{\mathcal U,\Phi,V,p}:T\to{\Bbb S}^n$ is well defined only if $p\in P_{\mathcal U,V}(T)$.  

\begin{lemma} 
Let points $p$ and $q$ lie in the same connected component $Q$ of $P_{\mathcal U,V}(T)$. 
Then $[f_{\mathcal U,V,p}]=[f_{\mathcal U,V,q}]$ in $[T,{\Bbb S}^n]$.
\end{lemma}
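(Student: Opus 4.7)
The plan is to produce an explicit homotopy between $f_{\mathcal U,\Phi,V,p}$ and $f_{\mathcal U,\Phi,V,q}$ by moving the base point along a path from $p$ to $q$ inside $Q$.

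First I would observe that $P_{\mathcal U,V}(T)$ is an open subset of $\mathbb R^{n+1}$, so $Q$ is in fact open and path-connected. For this it is enough to know that $\rho_{\mathcal U,\Phi,V}(T)$ is closed, and indeed Lemma 1.3 identifies this image with $g(|N(\mathcal U)|)$, which is the continuous image of a compact polyhedron (the nerve is a finite simplicial complex) and therefore compact. Consequently every connected component of the open set $P_{\mathcal U,V}(T)$ is open and hence path-connected.

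Next, fix any partition of unity $\Phi$ subordinate to $\mathcal U$ (by Lemma 1.2 the resulting homotopy class does not depend on this choice) and choose a continuous path $\gamma:[0,1]\to Q$ with $\gamma(0)=p$ and $\gamma(1)=q$. Define
$$F:T\times[0,1]\to{\Bbb S}^n,\qquad F(x,t):=\frac{\rho_{\mathcal U,\Phi,V}(x)-\gamma(t)}{\|\rho_{\mathcal U,\Phi,V}(x)-\gamma(t)\|}.$$
Since $\gamma(t)\in Q\subset P_{\mathcal U,V}(T)$ for every $t\in[0,1]$, we have $\rho_{\mathcal U,\Phi,V}(x)\ne\gamma(t)$ for all $x\in T$ and all $t$, so the denominator never vanishes and $F$ is continuous on $T\times[0,1]$.

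Finally, $F(\,\cdot\,,0)=f_{\mathcal U,\Phi,V,p}$ and $F(\,\cdot\,,1)=f_{\mathcal U,\Phi,V,q}$, so $F$ is the desired homotopy and $[f_{\mathcal U,V,p}]=[f_{\mathcal U,V,q}]$ in $[T,{\Bbb S}^n]$. The only step that requires any care is the first one: one must notice that $\rho_{\mathcal U,\Phi,V}(T)$ is closed (so that connected components of its complement are path-connected), and this is exactly what Lemma 1.3 delivers. Once this is in hand, the remainder is the standard straight-line-type homotopy argument, and there is no real obstacle.
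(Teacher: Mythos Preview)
Your argument is correct and follows exactly the paper's approach: choose a path in $Q$ from $p$ to $q$ and let it induce the obvious homotopy between $f_{\mathcal U,\Phi,V,p}$ and $f_{\mathcal U,\Phi,V,q}$. You supply the extra justification (compactness of the image, hence openness and path-connectedness of $Q$) that the paper leaves implicit; just note that your lemma references are shifted by one---what you call Lemma~1.2 and Lemma~1.3 are the paper's Lemma~1.1 and Lemma~1.2, respectively.
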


\begin{proof} 
Let $s(t)$ be a path in $Q$ connecting the points $p$ and $q$. 
Then $s$ induces a homotopy between the maps $f_{\mathcal U,\Phi,V,p}$ and $f_{\mathcal U,\Phi,V,q}$.
\end{proof}

\begin{df} 
For a  cover $\mathcal U=\{U_1,\ldots,U_m\}$ of a  space $T$, a set $V$ of $m$ points in ${\Bbb R}^{n+1}$, and $p\in P_{\mathcal U,V}(T)$ denote the homotopy class $[f_{\mathcal U,V,p}]$ in $[T,{\Bbb S}^n]$ by $h(\mathcal U,V,p)$. 
	
For a labeling $L:\vrt(K)\to\{0,1,\ldots,m\}$ of a simplicial complex $K$ we denote by $h(K,L,V,p)$ the homotopy class $h(\mathcal U_L(K),V,p)$ in $[|K|,{\Bbb S}^{n}]$. 
\end{df}

\noindent{\bf Example 1.3.} Let $K$ be a heptagon with seven consecutive vertices labelled as $0,1,2,3,2,1,3$.
Let $V=\{v_0,v_1,v_2,v_3\}$ be the set of vertices of a planar square. 
Then $h(K,L,V,p)=1$ if $p$ lies in the triangle $v_0v_1v_3$ and $h(K,L,V,p)=0$ otherwise. 

\medskip

Now we consider homotopy classes of covers of closed sets. 
Let $\mathcal C=\{C_0,\ldots,C_m\}$ be a closed cover of a space $T$. 
Let $\mathcal U=\{U_0,\ldots,U_m\}$ be an open cover of $T$ such that $U_i$ contains $C_i$ for all $i$. 
We say that $\mathcal U$ {\it contains} $\mathcal C$. 

We may assume that the nerves $N(\mathcal U)$ and $N(\mathcal C)$ are isomorphic. 
Otherwise, if there is a subset of indices $J\subset \{0,\ldots,m\}$ such that the intersection of those $U_i$ whose subindices are in $J$ is non-empty and the intersection of those $C_i$ whose subindices are in $J$ is empty we consider an open cover $\mathcal U'$ with 
$$
U_i':=U_i\setminus K_J, \; \; \text{ and } K_j:=\bigcap\limits_{j\in J}{\bar U_j}.
$$ 
Since $C_i\cap K_J=\emptyset$, we have that $U'_i$ contains $C_i$. 

Suppose two open covers $\mathcal U^1$ and $\mathcal U^2$ which both contain $\mathcal C$ and that$N(\mathcal U^1)$, $N(\mathcal U^2)$ and $N(\mathcal C)$ are isomorphic. 
Then the cover $\mathcal U^3$ such that each $U^3_i:=U^1_i\cap U^2_i$ also contains $\mathcal C$ moreover $N(\mathcal U^3)$ is isomorphic to $N(\mathcal C)$. 
Since  both $\mathcal U^1$ and $\mathcal U^2$ contain $\mathcal U^3$, we have the equalities $h(\mathcal U^1,V,p)=h(\mathcal U^3,V,p)= h(\mathcal U^2,V,p)$. 

This observation proves the following statement. 
\begin{lemma} 
\label{prop3} 
Let $\mathcal C$ be a closed cover of a normal space $T$. 
Then there exist an open cover, $\mathcal U$, of $T$ which contains $\mathcal C$ and such that the nerves $N(\mathcal U)$ and $N(\mathcal C)$ are isomorphic.
	
If open covers $\mathcal U^1$ and $\mathcal U^2$ both contain $\mathcal C$ and the nerves $N(\mathcal U^1)$, $N(\mathcal U^2)$ and $N(\mathcal C)$ are isomorphic, then $h(\mathcal U^1,V,p)=h(\mathcal U^2,V,p)$. 
\end{lemma}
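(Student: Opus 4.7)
My plan is to prove the two assertions separately. The essential content of both is that any two open refinements of $\mathcal C$ whose nerves agree with $N(\mathcal C)$ can be reduced, through intersection or shrinking, to a cover on which the construction $h(\cdot, V, p)$ behaves predictably.

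For the existence of $\mathcal U$, I would build $U_i$ as a finite intersection of open witnesses to the non-faces of $N(\mathcal C)$. Fix the finite list of subsets $J \subseteq \{0, \ldots, m\}$ for which $\bigcap_{j \in J} C_j = \emptyset$. For such a $J$ the complements $\{T \setminus C_j\}_{j \in J}$ form an open cover of $T$, so the shrinking lemma for finite open covers of a normal space produces open sets $V_{J,j}$ with $\bar V_{J,j} \subseteq T \setminus C_j$ and $\bigcup_{j \in J} V_{J,j} = T$. Setting $W_{J,j} := T \setminus \bar V_{J,j}$ yields open neighbourhoods of $C_j$ with $\bigcap_{j \in J} W_{J,j} = \emptyset$. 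Now define
\[
U_i \; := \; \bigcap \bigl\{ W_{J,i} \; : \; i \in J, \ \bigcap\nolimits_{j \in J} C_j = \emptyset \bigr\},
\]
a finite intersection, hence open, with $U_i \supseteq C_i$. For any non-face $J$ of $N(\mathcal C)$, $\bigcap_{j \in J} U_j \subseteq \bigcap_{j \in J} W_{J,j} = \emptyset$, so $J$ is a non-face of $N(\mathcal U)$; conversely, if $\bigcap_{j \in J} C_j \ne \emptyset$ then $\bigcap_{j \in J} U_j \supseteq \bigcap_{j \in J} C_j \ne \emptyset$. Hence $N(\mathcal U) \cong N(\mathcal C)$.

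For the second assertion, I would adopt the excerpt's own suggestion of forming $\mathcal U^3$ with $U_i^3 := U_i^1 \cap U_i^2$. Each $U_i^3$ is open and contains $C_i$; non-faces of $N(\mathcal U^1)$ remain non-faces of $N(\mathcal U^3)$ via $U_i^3 \subseteq U_i^1$, while faces of $N(\mathcal C)$ remain faces of $N(\mathcal U^3)$ via $C_j \subseteq U_j^3$, giving $N(\mathcal U^3) \cong N(\mathcal C)$. The key step is comparing $h(\mathcal U^3, V, p)$ with $h(\mathcal U^1, V, p)$: any partition of unity $\Phi$ subordinate to $\mathcal U^3$ satisfies $\operatorname{supp} \varphi_i \subseteq U_i^3 \subseteq U_i^1$, so $\Phi$ is simultaneously subordinate to $\mathcal U^1$. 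With the same $\Phi$, $V$ and $p$, the maps $\rho_{\mathcal U,\Phi,V}$ and $f_{\mathcal U,\Phi,V,p}$ are literally identical whether constructed from $\mathcal U^1$ or $\mathcal U^3$, so their homotopy classes coincide; by symmetry $h(\mathcal U^2, V, p) = h(\mathcal U^3, V, p)$. One still has to confirm $p \in P_{\mathcal U^3, V}(T)$, but the previous lemma shows $\rho_{\mathcal U,\Phi,V}(T) = g(|N(\mathcal C)|)$ is the same subset of $\mathbb R^{n+1}$ for all three covers, so the hypothesis transfers at once.

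The main obstacle I anticipate is getting the first part clean. The paragraph preceding the statement proposes an iterative shrinking by removing $K_J = \bigcap_{j \in J} \bar U_j$ from each $U_i$, but the assertion ``$C_i \cap K_J = \emptyset$'' used there is not automatic: one can easily arrange $C_i \subseteq \bar U_j$ for every $j \in J$ containing $i$ while still having $\bigcap_{j \in J} C_j = \emptyset$ (e.g.\ start with all $U_j = T$ on a two-point discrete space). The shrinking-lemma construction above side-steps this pitfall by producing the neighbourhoods $W_{J,j}$ directly from the separation property of normal spaces, rather than by excising pieces of an arbitrary preliminary enlargement.
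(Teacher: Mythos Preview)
Your argument is correct. For the second assertion you follow precisely the paper's route: form $\mathcal U^3$ with $U^3_i=U^1_i\cap U^2_i$ and compare each of $\mathcal U^1,\mathcal U^2$ with $\mathcal U^3$. The paper states the equality $h(\mathcal U^1,V,p)=h(\mathcal U^3,V,p)$ without justification; your observation that a partition of unity subordinate to $\mathcal U^3$ is automatically subordinate to $\mathcal U^1$ (so the very same $\Phi$ computes both invariants) is exactly the missing line, and your appeal to Lemma~1.2 to see that $P_{\mathcal U^k,V}(T)$ depends only on the common nerve settles the well-definedness of $p$.

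For the first assertion you take a different and better route. The paper's argument is the iterative excision $U_i':=U_i\setminus K_J$ with $K_J=\bigcap_{j\in J}\bar U_j$, together with the claim ``$C_i\cap K_J=\emptyset$''. Your two-point counterexample (all $U_j=T$) shows that this claim fails in general, so the excision can destroy the containments $C_i\subset U_i'$ and even the covering property. Your construction via the shrinking lemma produces, for each non-face $J$, open neighbourhoods $W_{J,j}\supseteq C_j$ with $\bigcap_{j\in J}W_{J,j}=\emptyset$ directly from normality, and then intersects over the finitely many $J$. This yields the desired $\mathcal U$ in one step, with $N(\mathcal U)=N(\mathcal C)$ verified cleanly in both directions. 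The cost is invoking the shrinking lemma; the gain is a proof that actually works and avoids any iterative bookkeeping.
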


This lemma shows that the homotopy class $h(\mathcal C,V,p):=h(\mathcal U,V,p)$ in $[T,{\Bbb S}^n]$ where $N(\mathcal U)=N(\mathcal C)$ and such that  $\mathcal U$ contains $\mathcal C$ is well defined. 

%


\section{Extension of covers}

In this section we consider extensions of covers of a subspace $A$ to a space $X$. 

We call a family of sets a {\it cover} of a space if it is either an open or closed cover.

\begin{df}   
Let $A$ be a subspace of a space $X$. 
Let $\mathcal S=\{S_0,\ldots,S_m\}$ be a cover of $A$ and $\mathcal F=\{F_0,\ldots,F_m\}$ be a cover of $X$. 
We assume that $\mathcal F$ is open if $\mathcal S$ is open and closed if $\mathcal S$ is closed. 	
We say that $\mathcal F$  is an extension of  $\mathcal S$ if  
$$S_i=F_i\cap A \, \mbox{ for all }\, i.$$ 
\end{df}

We start from the classic case:  $A={\Bbb S}^k$ and  $X={\Bbb B}^{k+1}$.  

\begin{theorem} 
\label{th21} 
Let $\mathcal S=\{S_0,\ldots,S_{n+1}\}$ be a cover of \,${\Bbb S}^k$.  
Suppose  the intersection of all the $S_i$ is empty. 
Then $\mathcal S$ can be extended to a cover $\mathcal F$ of ${\Bbb B}^{k+1}$ such that the intersection of all  the $F_i$ is empty if and only if $[\mathcal S]=0$ in $\pi_k({\Bbb S}^n)$. 
\end{theorem}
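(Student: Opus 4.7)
The strategy is to pass between $(n{+}2)$-covers with empty total intersection and maps into $\partial|\Delta^{n+1}|\cong{\Bbb S}^n$, so that $[\mathcal S]\in\pi_k({\Bbb S}^n)$ becomes the standard obstruction to extending a map ${\Bbb S}^k\to{\Bbb S}^n$ across the disk ${\Bbb B}^{k+1}$.

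\emph{Necessity.} Suppose an extension $\mathcal F$ exists. In the open case, take a partition of unity $\Phi=\{\varphi_0,\ldots,\varphi_{n+1}\}$ subordinate to $\mathcal F$; since each $\varphi_i$ vanishes off $F_i$, the restriction $\Phi|_A$ is subordinate to $\mathcal S$. The canonical map $c_{\mathcal F}(x):=\sum\varphi_i(x)v_i$ lands in $\partial|\Delta^{n+1}|$ (because $\bigcap F_i=\emptyset$), and its restriction $c_{\mathcal F}|_A$ is canonical for $\mathcal S$, so $[c_{\mathcal F}|_A]=[\mathcal S]$. Since ${\Bbb B}^{k+1}$ is contractible, $c_{\mathcal F}$ is null-homotopic; hence so is its restriction $c_{\mathcal F}|_A$ (it extends over the disk), giving $[\mathcal S]=0$. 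If $\mathcal F$ is closed, first use Lemma~\ref{prop3} to produce an open cover $\mathcal U\supseteq\mathcal F$ of ${\Bbb B}^{k+1}$ with $N(\mathcal U)=N(\mathcal F)$, shrunk enough that $N(\{U_i\cap A\})=N(\mathcal S)$, and then apply the previous argument to $\mathcal U$.

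\emph{Sufficiency.} Assume $[\mathcal S]=0$. I construct a map $c_A\colon A\to\partial|\Delta^{n+1}|$ together with a cover $\{D_0,\ldots,D_{n+1}\}$ of $\partial|\Delta^{n+1}|$ of the same type as $\mathcal S$ (with $\bigcap D_i=\emptyset$) satisfying $c_A^{-1}(D_i)=S_i$ \emph{exactly} and $[c_A]=[\mathcal S]$. Since $[\mathcal S]=0$, $c_A$ extends to $\tilde c\colon{\Bbb B}^{k+1}\to\partial|\Delta^{n+1}|$; setting $F_i:=\tilde c^{-1}(D_i)$ yields the desired extension, because preimages commute with $\bigcup_i$ and $\bigcap_i$, and $\tilde c^{-1}(D_i)\cap A=c_A^{-1}(D_i)=S_i$. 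In the open case, set $\rho_i(x):=\dist(x,A\setminus S_i)$ (or use a Urysohn function), $\varphi_i:=\rho_i/\sum_j\rho_j$, $c_A(x):=\sum\varphi_i(x)v_i$, and $D_i:=\{y\in\partial|\Delta^{n+1}|:y_i>0\}$; then $\varphi_i>0$ precisely on $S_i$, and $c_A$ is a canonical map for $\mathcal S$. In the closed case, choose $r>0$ smaller than $\min_{x\in A}\max_j\dist(x,S_j)$ (positive by compactness together with $\bigcap S_i=\emptyset$) and small enough that the nerve of $\{\,x\in A:\dist(x,S_i)<r\,\}$ equals $N(\mathcal S)$; set $\eta_i(x):=\max(r-\dist(x,S_i),0)$, $c_A(x):=(\eta_0(x),\ldots,\eta_{n+1}(x))/\sum_j\eta_j(x)$, and $D_i:=\{y\in\partial|\Delta^{n+1}|:y_i\ge y_j\ \forall j\}$. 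Since $\eta_i$ attains its maximum $r$ exactly on $S_i$, one has $c_A^{-1}(D_i)=S_i$; moreover $c_A$ is canonical for the open cover $\{\,\{x:\dist(x,S_i)<r\}\,\}\supseteq\mathcal S$ whose nerve matches $N(\mathcal S)$, so Lemma~\ref{prop3} yields $[c_A]=[\mathcal S]$.

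The main obstacle is the closed case of sufficiency: one needs an explicit construction in which $c_A^{-1}(D_i)$ is literally $S_i$, while $[c_A]$ still equals $[\mathcal S]$ (which, for closed covers, is defined only through an enclosing open cover). The truncated distances $\eta_i$ and the closed ``max-coordinate'' sets $\{y_i\ge y_j\}$ are engineered so that a single compactness estimate on the scale $r$ delivers both requirements at once.
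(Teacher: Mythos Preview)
Your argument is correct, and the necessity direction is essentially the paper's. For sufficiency you take a genuinely different route. The paper fixes one partition of unity $\Phi$, pulls back the open-star cover of $\partial\Delta^{n+1}$ along a null-homotopy $H:{\Bbb S}^k\times[0,1]\to{\Bbb S}^n$ of $\rho_{\mathcal S,\Phi}$, and then---because this only yields $\{\varphi_\ell>0\}\subset S_\ell$ on ${\Bbb S}^k\times\{0\}$ rather than equality---takes the union of these pullbacks over \emph{all} partitions of unity $\Phi\in\Pi(\mathcal S)$ to recover $S_\ell$ exactly, finally capping the cylinder with a disk. You instead engineer a \emph{single} map $c_A$ for which $c_A^{-1}(D_i)=S_i$ holds on the nose: via $\rho_i=\dist(\cdot,A\setminus S_i)$ and $D_i=\{y_i>0\}$ in the open case, and via truncated distances together with the closed ``max-coordinate'' sets $D_i=\{y_i\ge y_j\ \forall j\}$ in the closed case; then you extend $c_A$ over ${\Bbb B}^{k+1}$ directly and pull back. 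Your construction is more elementary, sidesteps the union over $\Pi(\mathcal S)$, and handles the closed case explicitly rather than by reduction; the paper's cylinder-plus-cap picture, by contrast, transports verbatim to the collar-neighborhood argument of Theorem~\ref{th22}.
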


\begin{proof} 
If $\mathcal S$ can be extended to $\mathcal F$, then we have $\rho_{\mathcal S}:{\Bbb S}^k\to{\Bbb S}^n$ and $\rho_{\mathcal F}:{\Bbb B}^{k+1}\to{\Bbb S}^n$. 
Since $\rho_{\mathcal S}=\rho_{\mathcal F}\circ\imath$  and  $\imath:{\Bbb S}^k\to{\Bbb B}^{k+1}$ is null--homotopic, we have  $[\mathcal S]=[\rho_{\mathcal S}]=0$.  
	
If $[\mathcal S]=0$, then we will show that $\mathcal S$ can be extended to a cover $\mathcal F$.  
From Lemma \ref{prop3} it suffices to prove the theorem for open covers. 
Let $\Phi=\{\varphi_0,\ldots,\varphi_{n+1}\}$ be a partition of unity subordinate to $\mathcal S$. 
Then we have a continuous map 
$$\rho_{\mathcal S,\Phi}:{\Bbb S}^k\to \partial \Delta^{n+1}={\Bbb S}^n,$$ 
where $\rho_{\mathcal S,\Phi}:=\rho_{\mathcal S,\Phi,V}$	(see Section 1) and $V$ is the set of vertices of an $(n+1)$--simplex $\Delta^{n+1}$.

Since $[\rho_{\mathcal S,\Phi}]=0$ in $[{\Bbb S}^k,{\Bbb S}^n]$, there is a homotopy 
$$H:{\Bbb S}^k\times[0,1]\to{\Bbb S}^n,$$ 
where $H(x,0)=\rho_{\mathcal S,\Phi}(x)$, $H(x,1)=v_0$ for all $x$, and $v_0$ is a vertex of $\Delta^{n+1}$. 

Let $L$ be a labeling on $\vrt(\Delta^{n+1})$ such that $L(v_i)=i$. 
Denote
$$U_\ell(\Phi,D):=H^{-1}(U_\ell(\Delta^{n+1}))), \; D:={\Bbb S}^k\times[0,1],$$
where $U_L(\Delta^{n+1}) =\{U_\ell(\Delta^{n+1}),\, \ell=1,\ldots,n+2\}$ (see Section 3). 
It is clear that $\mathcal U_\ell(\Phi,D):=\{U_\ell(\Phi,D)\}$ is an open cover of $D$, $\mathcal U(\Phi,{\Bbb S}^k):=\mathcal U(\Phi,D)|_{{\Bbb S}^k}$ is a cover of ${\Bbb S}^k$, and 
$$U_\ell(\Phi,{\Bbb S}^k)=\{x\in {\Bbb S}^k: \varphi_\ell(x)>0\}\subset S_\ell \; \mbox{ for all } \, \ell.$$

Denote by $\Pi(\mathcal S)$ the set of all partitions of unity subordinate to $\mathcal S$. 
Then for all $\ell$
$$S_\ell=\bigcup\limits_{\Phi\in\Pi(\mathcal S)}{U_\ell(\Phi,{\Bbb S}^k)}. $$

Let 
$$W_\ell=\bigcup\limits_{\Phi\in\Pi(\mathcal S)}{U_\ell(\Phi,D)}. $$
Then $\mathcal W:=\{W_\ell\}$ is an open cover of $D$ that extends $\mathcal S$. 

The boundary of $D$ consists of two components $D_0:={\Bbb S}^k\times\{0\}$ and  $D_1:={\Bbb S}^k\times\{1\}$. 
Actually, $\mathcal W$ on $D_0$ is $\mathcal S$ and $D_1$ is covered only by one set, namely $D_1\subset W_0.$ 
Let $Z$ be a $(k+1)$--disk with boundary $D_1$ and let 
$$B:=D\cup Z, \; \; D\cap Z=D_1. $$
It is clear that $B$ is homeomorphic to ${\Bbb B}^{k+1}$. 
Let $F_0:=W_0\cup Z$ in $B$ and let $\mathcal F:=\{F_0,W_1,\ldots,W_{n+1}\}$, then $\mathcal F$ is a cover of $B$ that extends $\mathcal S$.
\end{proof}

Next consider the case when $A$ is the boundary of a manifold $X$.
\begin{df} 
Let  $\mathcal S=\{S_0,\ldots,S_{n+1}\}$ be a cover of an oriented $n$--dimensional  manifold $N$ without boundary.  
If the intersection of all $S_i$ is empty, then $[\mathcal S]\in{\Bbb Z}=[N,{\Bbb S}^n]$. 
We call $[\mathcal S]$ the degree of $\mathcal S$ and denote it by $\deg(\mathcal S)$. 
\end{df} 

\begin{theorem} 
\label{th22} 
Let $M$ be an oriented $(n+1)$--dimensional manifold with boundary, and let  $\mathcal S=\{S_0,\ldots,S_{n+1}\}$ be a cover of $\partial M$ such that the intersection of all $S_i$ is empty. 
Then $\mathcal S$ can be extended to a cover $\mathcal F$ of $M$, such that all covers $F_i$ have no a common point, if and only if the degree of $\mathcal S$ is zero. 
\end{theorem}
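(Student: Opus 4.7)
The plan is to imitate the proof of Theorem \ref{th21}, replacing the round ball ${\Bbb B}^{k+1}$ with $M$ and the cylinder ${\Bbb S}^k\times[0,1]$ with a closed collar of $\partial M$.

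For necessity I would argue exactly as there: an extension $\mathcal F$ yields $\rho_{\mathcal F}:M\to{\Bbb S}^n$ extending $\rho_{\mathcal S}:\partial M\to{\Bbb S}^n$, and since $\imath_*[\partial M]=0$ in $H_n(M)$ (because $\partial M$ bounds $M$), one immediately gets $\deg(\mathcal S)=\langle\rho_{\mathcal F}^*[{\Bbb S}^n],\imath_*[\partial M]\rangle=0$.

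For sufficiency, assume $\deg(\mathcal S)=0$. By Lemma \ref{prop3} I may suppose $\mathcal S$ is open; pick a subordinate partition of unity $\Phi$ and form $\rho_{\mathcal S,\Phi}:\partial M\to{\Bbb S}^n$. Applying the Hopf classification theorem to the closed oriented $n$-manifold $\partial M$ (componentwise if necessary), the vanishing of the degree produces a null-homotopy $H:\partial M\times[0,1]\to{\Bbb S}^n$ with $H(x,0)=\rho_{\mathcal S,\Phi}(x)$ and $H(x,1)=v_0$ for some vertex $v_0$ of $\Delta^{n+1}$. The PL collar-neighborhood theorem then provides a closed collar $C\subset M$ of $\partial M$ together with a homeomorphism $C\cong\partial M\times[0,1]$ identifying $\partial M\times\{0\}$ with $\partial M$; set $M':=\overline{M\setminus C}$, so $M=C\cup M'$ and $C\cap M'=\partial M\times\{1\}$.

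On the collar $C$ I would run the identical construction as in Theorem \ref{th21}: pull back the standard cover $\mathcal U_L(\Delta^{n+1})$ of $\partial\Delta^{n+1}$ through $H$, then take the union of these pullbacks over all $\Phi\in\Pi(\mathcal S)$ to obtain an open cover $\mathcal W=\{W_0,\ldots,W_{n+1}\}$ of $C$ whose restriction to $\partial M\times\{0\}$ equals $\mathcal S$, whose slice $\partial M\times\{1\}$ sits inside $W_0$, and whose total intersection is empty. Finally I absorb the interior piece by setting $F_0:=W_0\cup M'$ and $F_i:=W_i$ for $1\le i\le n+1$; then $\mathcal F=\{F_0,\ldots,F_{n+1}\}$ is the desired extension. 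I expect the Hopf-theorem step to be the main obstacle — it is classical for connected closed oriented $n$-manifolds, but one must be careful when $\partial M$ is disconnected, interpreting $\deg(\mathcal S)=0$ as the vanishing of the degree on each component so that the null-homotopy $H$ exists globally and the construction can be performed piecewise.
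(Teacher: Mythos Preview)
Your approach matches the paper's almost exactly: for necessity the paper likewise invokes the Hopf (Extension/Degree) Theorem, and for sufficiency it repeats the Theorem~\ref{th21} construction on a collar $C\cong\partial M\times[0,1]$ and then sets $F_0:=W_0\cup(M\setminus D)$, just as you do. Your caution about disconnected $\partial M$ is well-placed but goes beyond what the paper addresses --- Definition~2.2 writes $[N,{\Bbb S}^n]={\Bbb Z}$, so the paper is tacitly working with connected boundary and takes no extra care there.
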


\begin{proof} 
From the {\it Hopf Extension (Degree) Theorem} it follows that a continuous map $f:\partial M\to {\Bbb S}^n$ can be extended to a globally defined continuous map $F: M\to {\Bbb S}^n$, with  $\partial F=f$, if and only if the degree of $f$ is zero. 
It implies that if $\mathcal S$ can be extended, then $\deg(\rho_{\mathcal S})=\deg(\mathcal S)=0.$

If $\deg(\mathcal S)=0$, then the proof that $\mathcal S$ can be extended is almost the same as the proof in Theorem \ref{th21}. 
In the last step we can use the {\it Collar Neighborhood Theorem}  that $\partial M$ has a neighborhood $C$ in $M$ which is homeomorphic to the product $D=\partial M\times[0,1]$. 
Let $F_0:=W_0\cup (M\setminus D)$. 
Then $\mathcal F:=\{F_0,W_1,\ldots,W_{n+1}\}$  is a cover of $M$ that extends $\mathcal S$.
\end{proof}

It is an interesting problem to find extensions of Theorems \ref{th21} and \ref{th22} for general $X, A$ and $V$. 

For extensions of the KKM and Sperner type lemmas we need pairs of spaces $(X,A)$ such that covers of $A$, which are not null--homotopic, cannot be extended to $X$. 
So we need only the ``necessary''parts of Theorems \ref{th21} and \ref{th22}.
Note that pairs of spaces $(X,A)$ in these theorems satisfy the property that any continuous map $f:A\to {\Bbb S}^n$ with $[f]\ne0$ cannot be extended to a continuous $F:M\to {\Bbb S}^n$.

Let $\mathcal S=\{S_0,\ldots,S_{n+1}\}$ be a cover of $A$ and $\mathcal F=\{F_0,\ldots,F_{n+1}\}$ be a cover of $X$.  
Suppose that the intersection of all of the $S_i$ is empty. 
If $\mathcal F$ is an extension of $\mathcal S$ and the intersection of all of the $F_i$ is empty, then we have maps $\rho_{\mathcal S}:A\to{\Bbb S}^n$ and $\rho_{\mathcal F}:X\to{\Bbb S}^n$ such that  $\rho_{\mathcal F}|_A=\rho_{\mathcal S}$. 
This fact motivates the following definition.

\begin{df}
We say that a pair of spaces $(X,A)$, where $A\subset X$, belongs to $\ep_n$ and write $(X,A)\in\ep_n$ if   there is a continuous map $f:A\to {\Bbb S}^n$  with $[f]\ne0$ in $[A,{\Bbb S}^n]$ that cannot be extended to a continuous map $F:X\to {\Bbb S}^n$ with $F|_A=f$. 
\end{df}

We denoted this class of pairs by $\ep$ after S. Eilenberg and L. S. Pontryagin  who initiated obstruction theory in the late 1930s. 
Obstruction theory (see \cite{Hu,Span}) considers homotopy invariants that equals zero if a map can be extended from the $k$--skeleton of $X$ to the $(k+1)$--skeleton and is non-zero otherwise. 

We conclude this section with a theorem that is a simple consequence of obstruction theory.

\begin{theorem} \label{th23}
Let $(X,A)$ be a pair of spaces.
\begin{enumerate}
\item If the embedding $\imath: A\to X$ is null--homotopic and there are non null--homotopic maps $f:A\to{\Bbb S}^n$, then $(X,A)\in\ep_n$. 
In particular, if $\pi_k({\Bbb S}^n)\ne0$, then $({\Bbb B}^{k+1},{\Bbb S}^k)\in\ep_n$.  
\item If $X$ is an oriented $(n+1)$--dimensional manifold and $A=\partial X$, then $(X,A)\in\ep_n$. 
\end{enumerate}
\end{theorem}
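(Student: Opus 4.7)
The plan is to handle the two parts separately, producing in each case an explicit non-null-homotopic map $f\colon A\to{\Bbb S}^n$ that cannot be extended to $X$.

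For part (1), I would proceed by contradiction. Pick any non-null-homotopic $f\colon A\to{\Bbb S}^n$, which exists by hypothesis, and suppose an extension $F\colon X\to{\Bbb S}^n$ with $F|_A=f$ exists. Let $H\colon A\times[0,1]\to X$ witness the null-homotopy of $\imath$, so that $H(\cdot,0)=\imath$ and $H(\cdot,1)$ is the constant map to some $x_0\in X$. Then $F\circ H\colon A\times[0,1]\to{\Bbb S}^n$ is a null-homotopy of $f$ to the constant map at $F(x_0)$, contradicting the choice of $f$. For the particular case, ${\Bbb B}^{k+1}$ is contractible so the inclusion $\imath\colon{\Bbb S}^k\hookrightarrow{\Bbb B}^{k+1}$ is null-homotopic, and the hypothesis $\pi_k({\Bbb S}^n)\ne 0$ supplies a non-null-homotopic map ${\Bbb S}^k\to{\Bbb S}^n$; applying what we just proved finishes this clause.

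For part (2), it suffices to exhibit $f\colon\partial X\to{\Bbb S}^n$ of nonzero degree, since by the Hopf Extension (Degree) Theorem already invoked in the proof of Theorem~\ref{th22}, such an $f$ admits no continuous extension to $X$, and nonzero degree forces $[f]\ne 0$. To construct $f$, I would assume $\partial X\ne\emptyset$ (else the conclusion is vacuous), choose a connected component $\Sigma$ of $\partial X$, and collapse the complement of a small open $n$-disk in $\Sigma$ to a point; this yields a degree-one map $\Sigma\to{\Bbb S}^n$. Sending every other component of $\partial X$ to the basepoint extends this to the desired $f\colon\partial X\to{\Bbb S}^n$ of total degree $1$.

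The only step that is not immediately formal is the existence of the degree-one map on a single component, but this is a standard consequence of the fundamental class of a closed oriented $n$-manifold (the collapse construction above realizes it concretely). Everything else is direct: naturality of null-homotopies under composition in part (1), and the Hopf Extension Theorem together with the additivity of degree across boundary components in part (2). I do not anticipate any genuine obstacle.
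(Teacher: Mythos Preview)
Your argument is correct and follows essentially the same route as the paper: part~(1) is exactly the factorization $f=F\circ\imath$ with $[\imath]=0$ forcing $[f]=0$, and part~(2) is the Hopf Extension (Degree) Theorem. The only difference is that the paper's proof of part~(2) is a single sentence invoking Hopf, whereas you additionally spell out the collapse construction of a degree-one map on $\partial X$; this is just supplying a detail the paper leaves implicit (note, however, that the $\partial X=\emptyset$ case is not ``vacuous'' but rather excluded by the implicit convention that manifolds with boundary have nonempty boundary).
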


\begin{proof} 
1. Assume the converse.  Then $f:A\to {\Bbb S}^n$, with $[f]\ne0$,  can be extended to $F:X\to {\Bbb S}^n$. 
Since $f=F\circ\imath$ and $[\imath]=0$ in $[A,X]$, we have that $[f]=0$ in $[A,{\Bbb S}^n]$ which is a contradiction. 

2. From the Hopf theorem $f:A\to {\Bbb S}^n$ can be extended if and only if $[f]=0$. 
\end{proof}


\section{KKM and Sperner type lemmas}

The KKM (Knaster--Kuratowski--Mazurkiewicz) lemma states: 

\medskip

\noindent{\it If  a simplex $\Delta^m$ is covered by closed sets $C_i$ for $i \in I_m:=\{0,\dots,m\}$ and that for all $J \subset I_m$ the face of $\Delta^m$ that is spanned by the vertices $v_i$ with $i \in J$ is covered by $C_i$ then all the $C_i$ have a common intersection point.}

\medskip

This lemma was published in 1929 \cite{KKM}. 
Actually, the KKM lemma is an extension of Sperner's lemma published one year before in 1928 \cite{Sperner}.

Let $T$ be a triangulation of a simplex $\Delta^m$. 
Suppose that each vertex of $T$ is assigned a unique label from $I_m$. 
A labeling $L$ is called a {\it Sperner labeling} if  the vertices are labelled in such a way that a vertex $u$  of $T$ belonging to a face that is spanned by vertices $v_i$ from $\vrt(\Delta^m)$ for $i \in J \subset I_m$, can only be labelled by $k$ from $J$.  
 Sperner's lemma states:

\medskip

\noindent {\it Every Sperner labeling of a triangulation of $\Delta^m$ contains a cell labelled with a complete set of labels: $\{0,1,\ldots, m\}$.} 

\medskip

We consider extensions of the KKM and Sperner lemmas.

\begin{theorem} 
\label{th31}
Let $(X,A)\in\ep_{m-1}$ and let  $\mathcal S=\{S_0,\ldots,S_{m}\}$ be a cover of $A$ such that the intersection of all $S_i$ is empty and $[\mathcal S]\ne0$ in $[A,{\Bbb S}^{m-1}]$. 
If  $\mathcal F=\{F_0,\ldots,F_{m}\}$ is a cover of $X$ that extends $\mathcal S$, then all the $F_i$ have a common intersection point.
\end{theorem}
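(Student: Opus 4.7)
The plan is to argue by contradiction. Suppose $\bigcap_{i=0}^{m} F_i = \emptyset$; from $\mathcal F$ I would construct a continuous map $X\to{\Bbb S}^{m-1}$ whose restriction to $A$ represents the class $[\mathcal S]\ne 0$, directly contradicting the hypothesis $(X,A)\in\ep_{m-1}$ (understood, as in the proofs of Theorem~\ref{th23}, to mean that no non-null-homotopic map $A\to{\Bbb S}^{m-1}$ extends to $X$).

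First I would reduce to the case that $\mathcal F$ is open. If $\mathcal F$ is closed, Lemma~\ref{prop3} produces an open cover $\mathcal F'$ containing $\mathcal F$ with $N(\mathcal F')\cong N(\mathcal F)$; in particular $\bigcap F'_i=\emptyset$. The restriction $\mathcal S':=\mathcal F'|_A$ is then an open cover of $A$ containing $\mathcal S$, has the same nerve as $\mathcal S$, and therefore $[\mathcal S']=[\mathcal S]\ne 0$. Replacing the pair by $(\mathcal S',\mathcal F')$ reduces us to the open case.

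Next, since $\bigcap F_i=\emptyset$ the nerve $N(\mathcal F)$ contains no $m$-simplex and embeds in $\partial\Delta^m\cong{\Bbb S}^{m-1}$. I would choose a partition of unity $\Phi=\{\varphi_0,\dots,\varphi_m\}$ subordinate to $\mathcal F$ and form the map $\rho_{\mathcal F,\Phi}:X\to{\Bbb S}^{m-1}$ of Section~1. Because $F_i\cap A=S_i$, the restricted family $\{\varphi_i|_A\}$ is automatically a partition of unity subordinate to $\mathcal S$, so $\rho_{\mathcal F,\Phi}|_A=\rho_{\mathcal S,\Phi|_A}$ represents the class $[\mathcal S]\ne 0$. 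This is the desired extension over $X$ of a non-null-homotopic map on $A$, contradicting $(X,A)\in\ep_{m-1}$.

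The main obstacle is the first step: enlarging $\mathcal F$ from closed to open without disturbing its nerve or its intersection pattern with $A$, and making sure the replacement $\mathcal S'$ still satisfies $[\mathcal S']=[\mathcal S]\ne 0$. Once one is in the open case, the rest is simply reading off the Section~1 construction and observing the tautological compatibility between $\Phi$ on $X$ and $\Phi|_A$ on $A$, which is immediate from $F_i\cap A=S_i$.
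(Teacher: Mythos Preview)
Your approach is essentially the paper's own proof, just with the details spelled out: the paper's argument is the single line ``Assume the converse. Then $\rho_{\mathcal S}:A\to{\Bbb S}^{m-1}$ can be extended to $\rho_{\mathcal F}:X\to{\Bbb S}^{m-1}$ which is a contradiction,'' and you have unpacked exactly this---the reduction to open covers via Lemma~\ref{prop3}, the choice of a partition of unity on $X$, and the observation that its restriction to $A$ is subordinate to $\mathcal S$. Your parenthetical caveat about reading $\ep_{m-1}$ in the strong sense (every non-null-homotopic map fails to extend, not merely some) is also implicit in the paper's usage, so you are matching its intended meaning.
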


\begin{proof} 
Assume the converse. 
Then $\rho_{\mathcal S}:A\to{\Bbb S}^{m-1}$ can be extended to $\rho_{\mathcal F}:X\to{\Bbb S}^{m-1}$ which is a contradiction. 
\end{proof}

Theorem \ref{th23} implies that if $\pi_k({\Bbb S}^{n})\ne0$, then $({\Bbb B}^{k+1},{\Bbb S}^{k})\in\ep_{n}$.

\begin{cor} 
\label{cor41} 
Let $\mathcal F=\{F_0,\ldots,F_{m}\}$ be a cover of \, ${\Bbb B}^{k+1}$ that extends a cover $\mathcal S$ of ${\partial{\Bbb B}^{k+1}}={\Bbb S}^k$. 
If the intersection of all $S_i$ is empty and $[\mathcal S]\ne0$ in $\pi_k({\Bbb S}^{m-1})$, then all the $F_i$ have a common intersection point.
\end{cor}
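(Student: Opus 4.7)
The plan is to derive the corollary by combining Theorem \ref{th31} with part (1) of Theorem \ref{th23}, both already established in the excerpt. The hypothesis $[\mathcal S]\neq 0$ in $\pi_k({\Bbb S}^{m-1})$ presupposes that $\pi_k({\Bbb S}^{m-1})\neq 0$, which is precisely the nontriviality condition required by Theorem \ref{th23}(1). Since the inclusion $\imath:{\Bbb S}^k\hookrightarrow{\Bbb B}^{k+1}$ is null--homotopic (the disk being contractible), Theorem \ref{th23}(1) applied with $n=m-1$ yields $({\Bbb B}^{k+1},{\Bbb S}^k)\in\ep_{m-1}$.

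With this pair membership established, I would apply Theorem \ref{th31} with $X={\Bbb B}^{k+1}$, $A={\Bbb S}^k$, and $n=m-1$. Under the standard identification $[{\Bbb S}^k,{\Bbb S}^{m-1}]\cong\pi_k({\Bbb S}^{m-1})$, all three remaining hypotheses of Theorem \ref{th31} carry over verbatim: the intersection of the $S_i$ is empty, $[\mathcal S]\neq 0$, and $\mathcal F$ extends $\mathcal S$. The conclusion of Theorem \ref{th31} then produces a common intersection point for all of the $F_i$, which is the desired statement.

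There is essentially no obstacle here, since the corollary is a direct specialization of Theorem \ref{th31} to the prototype pair $({\Bbb B}^{k+1},{\Bbb S}^k)$. The only small verification needed is that this pair belongs to $\ep_{m-1}$, and that step is handled once and for all by Theorem \ref{th23}(1). This is precisely why the statement is presented as a corollary rather than an independent theorem.
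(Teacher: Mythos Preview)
Your proposal is correct and follows exactly the paper's approach: the paper states just before the corollary that Theorem~\ref{th23} gives $({\Bbb B}^{k+1},{\Bbb S}^{k})\in\ep_{m-1}$ whenever $\pi_k({\Bbb S}^{m-1})\ne0$, and the corollary is then an immediate specialization of Theorem~\ref{th31}.
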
 

Note that for $k=m-1$ we have the KKM lemma. 
Indeed, the assumptions in this lemma yield that $[\mathcal S]=\deg(\mathcal S)=1\in \pi_k({\Bbb S}^{m-1})={\Bbb Z}$. 

It is interesting that this corollary can be non--trivial for $k>m-1$. 
Consider the cover $\mathcal U:=\mathcal U_{L_\tau}(K)=\{U_0(K),U_1(K),U_2(K),U_3(K)\}, \, K={\Bbb S}^3_{12}$, from Example 1.2. 
Since $[\mathcal U]=1\in \pi_3({\Bbb S}^2)={\Bbb Z}$ Corollary \ref{cor41} implies that

\medskip

\noindent{\it If a cover  $\mathcal F=\{F_0,F_1,F_2,F_3\}$ of ${\Bbb B}^4$ is such that $\mathcal F|_{\partial{\Bbb B}^{4}}=\mathcal U$, then the intersection of all $F_i$ is not empty.}

\medskip

However, for $k=m=2$ any cover $\mathcal S=\{S_0,S_1,S_2\}$ of ${\Bbb S}^2$, where the $S_i$ have no a common point, can be extended to a cover $\mathcal F$ of ${\Bbb B}^3$ such that the intersection of all $F_i$ is empty. 
Actually, it follows from the fact that $\pi_2({\Bbb S}^1)=0$.  

Theorems 1.1 and 2.1 imply a condition for the existence of KKM type lemmas for arbitrary positive integers $k$ and $m$. 

\begin{cor} 
For given $k$ and $m$ there is a  cover $\mathcal S=\{S_0,\ldots,S_{m}\}$ of\, ${\Bbb S}^k$ such that the intersection of all $S_i$ is empty and for any cover $\mathcal F$ of ${\Bbb B}^{k+1}$ that extends $\mathcal S$ all the $F_i$ have a common intersection point if and only if $\pi_k({\Bbb S}^{m-1})\ne0$. 
\end{cor}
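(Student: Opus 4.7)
The plan is to derive this biconditional as a direct packaging of Theorem~1.1 (which realizes any homotopy class in $[T,{\Bbb S}^{m-1}]$ as $[\mathcal U]$ for some open cover) together with Theorem~\ref{th21} (which characterizes extendability of a cover $\mathcal S$ of ${\Bbb S}^k$ over ${\Bbb B}^{k+1}$, with empty total intersection preserved, by the vanishing of $[\mathcal S]\in\pi_k({\Bbb S}^{m-1})$). No new geometric input is needed.

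For the forward direction, suppose $\pi_k({\Bbb S}^{m-1})\ne 0$. I would pick any nonzero element $h\in\pi_k({\Bbb S}^{m-1})=[{\Bbb S}^k,{\Bbb S}^{m-1}]$ and apply Theorem~1.1 with $T={\Bbb S}^k$ to obtain an open cover $\mathcal S=\{S_0,\ldots,S_m\}$ of ${\Bbb S}^k$ with $[\mathcal S]=h$; since $[\mathcal S]$ is defined only when the total intersection is empty, this gives automatically $\bigcap_i S_i=\emptyset$. If some extension $\mathcal F$ of $\mathcal S$ to ${\Bbb B}^{k+1}$ also satisfied $\bigcap_i F_i=\emptyset$, the ``only if'' half of Theorem~\ref{th21} would force $[\mathcal S]=0$, contradicting the choice of $h$. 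Hence every extension $\mathcal F$ must satisfy $\bigcap_i F_i\ne\emptyset$, which is the property required of $\mathcal S$.

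For the reverse direction, assume $\pi_k({\Bbb S}^{m-1})=0$ and let $\mathcal S=\{S_0,\ldots,S_m\}$ be any cover of ${\Bbb S}^k$ with $\bigcap_i S_i=\emptyset$. Then $[\mathcal S]\in [{\Bbb S}^k,{\Bbb S}^{m-1}]=\pi_k({\Bbb S}^{m-1})$ is trivially zero, so the ``if'' half of Theorem~\ref{th21} produces an extension $\mathcal F$ of $\mathcal S$ to ${\Bbb B}^{k+1}$ with empty total intersection. Thus no $\mathcal S$ can witness the KKM-type property, establishing the contrapositive.

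The only mild subtlety to keep in mind is that ``cover'' in the statement may be either open or closed. This causes no trouble: Theorem~1.1 already supplies an open cover in the forward direction, which is more than enough to witness existence, while Theorem~\ref{th21} handles both the open and closed cases uniformly through the reduction to open covers given by Lemma~\ref{prop3}. I therefore do not anticipate any genuine obstacle; the corollary is essentially a tidy bookkeeping combination of the two previously established theorems.
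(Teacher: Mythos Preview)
Your proposal is correct and is exactly the argument the paper intends: the corollary is stated right after the remark that ``Theorems 1.1 and 2.1 imply a condition for the existence of KKM type lemmas,'' and your two directions use precisely those two theorems in the way the paper has in mind.
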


Now we extend Theorem \ref{th31} for homotopy classes $h(\mathcal S,V,p)$.

\begin{df}
Let $V$ be  a set  of points $v_0,\ldots,v_m$ in  ${\Bbb R}^d$. 
Consider a point  $p\in{\Bbb R}^d$. 
Denote by $\cov_V(p)$ the collection of all subsets $J$ in $I_m$ such that simplices (convex hulls) in ${\Bbb R}^d$ spanned by vertices $\{v_j, \, j\in J\}$ contain $p$.
\end{df} 

It is clear that we have the following
\begin{prop}
Let $\mathcal S=\{S_0,\ldots,S_m\}$ be a cover of a  space $T$. 
Let   $V:=\{v_0,\ldots, v_m\}$ and $p$ be points in ${\Bbb R}^{d}$. 
Then $p\in P_{\mathcal U,V}(T)$ if and only if for any $J\in\cov_V(p)$  the intersection of the $S_i$ whose subindices $i$ are in $J$ is empty.
\end{prop}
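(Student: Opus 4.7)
The claim is essentially a direct translation of Lemma~1.2 into the language of $\cov_V(p)$. (I read $P_{\mathcal U,V}(T)$ in the statement as $P_{\mathcal S,V}(T)$; when $\mathcal S$ is a closed cover, I first invoke Lemma~\ref{prop3} to replace it by an open cover with the same nerve so that the partition-of-unity formulation applies.)

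The plan is to exploit the explicit description of the image furnished by Lemma~1.2: the set $\rho_{\mathcal S,\Phi,V}(T)$ equals $g(|N(\mathcal S)|)$, where $g$ sends each vertex $S_i$ of the nerve to $v_i$ and is extended linearly on each simplex. Written out,
$$
\rho_{\mathcal S,\Phi,V}(T) \;=\; \bigcup_{J\in N(\mathcal S)}\conv\bigl(\{v_i : i\in J\}\bigr),
$$
where $J$ ranges over those subsets of $I_m$ with $\bigcap_{i\in J}S_i\neq\emptyset$.

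I would then match this description against the definition of $\cov_V(p)$. The condition $p\in P_{\mathcal S,V}(T)$ says that for no simplex $J$ of $N(\mathcal S)$ does $\conv(\{v_i:i\in J\})$ contain $p$, while $J\in\cov_V(p)$ means exactly that $p\in\conv(\{v_i:i\in J\})$. Combining these, $p\in P_{\mathcal S,V}(T)$ is logically equivalent to the statement that no $J$ is simultaneously a simplex of $N(\mathcal S)$ and an element of $\cov_V(p)$; contrapositively, every $J\in\cov_V(p)$ fails to be a nerve simplex, i.e.\ $\bigcap_{i\in J}S_i=\emptyset$. This yields both implications at once. For the reverse direction taken in isolation, one can also argue more concretely: given $p=\rho_{\mathcal S,\Phi,V}(x)=\sum\varphi_i(x)v_i$, the set $J:=\{i:\varphi_i(x)>0\}$ clearly lies in $\cov_V(p)$ and contains $x$ in $\bigcap_{i\in J}S_i$ since each $\varphi_i$ is supported in $S_i$.

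There is no real obstacle beyond bookkeeping: the argument is a dictionary translation between the geometric description of $\rho_{\mathcal S,\Phi,V}(T)$ provided by Lemma~1.2 and the combinatorial sets $\cov_V(p)$ and $N(\mathcal S)$. The only subtle point is to notice that if some $J\in\cov_V(p)$ happens to satisfy $\bigcap_{i\in J}S_i=\emptyset$, then $J$ is simply not a simplex of $N(\mathcal S)$ and imposes no constraint on the image of $\rho$.
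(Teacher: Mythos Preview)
Your argument is correct and is precisely the natural unpacking of the definitions via Lemma~1.2; the paper itself offers no proof, introducing the proposition with ``It is clear that we have the following'' and leaving the verification to the reader. Your write-up is exactly what that verification looks like.
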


\begin{theorem} 
\label{th1}  
Let $(X,A)\in\ep_n$. 
Let  $\mathcal S=\{S_0,\ldots,S_m\}$ and $\mathcal F=\{F_0,\ldots,F_m\}$ be  covers of $A$ and $X$ respectively.  
Let $V:=\{v_0,\ldots, v_m\}$ and $p$ be points in ${\Bbb R}^{n+1}$. Suppose  $\mathcal F$ extends $\mathcal S$,  for all $J\in\cov_V(p)$ the intersection of the $S_j$ whose subindices are in $J$ is empty, and  
$$h(\mathcal S,V,p)\ne0 \mbox{ in } [A,{\Bbb S}^n]. $$ 
Then there is $J\in\cov_V(p)$ such that  
$$\bigcap\limits_{j\in J}{F_j}\ne\emptyset. $$
\end{theorem}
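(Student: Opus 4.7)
The plan is to argue by contradiction, in direct parallel to the proof of Theorem \ref{th31}, and to use Proposition 3.1 to translate the sought conclusion about intersections of the $F_j$ into the non-existence of a certain continuous map $X\to{\Bbb S}^n$. Specifically, assume for contradiction that $\bigcap_{j\in J} F_j = \emptyset$ for every $J\in\cov_V(p)$; Proposition 3.1 applied to the cover $\mathcal{F}$ of $X$ then says exactly that $p\in P_{\mathcal{F},V}(X)$, so the class $h(\mathcal{F},V,p)\in[X,{\Bbb S}^n]$ is well defined. Pick a partition of unity $\Phi=\{\varphi_0,\ldots,\varphi_m\}$ subordinate to an open cover containing $\mathcal{F}$ (invoking Lemma \ref{prop3} when $\mathcal{F}$ is closed), and form the map $f_{\mathcal{F},\Phi,V,p}:X\to{\Bbb S}^n$ of Section 1.

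Next I would verify that restricting this map to $A$ yields a representative of $h(\mathcal{S},V,p)$. Because $\mathcal{F}$ extends $\mathcal{S}$ we have $F_i\cap A=S_i$, so for $x\in A$ the condition $\varphi_i(x)>0$ forces $x\in S_i$, and hence $\Phi|_A$ is a partition of unity on $A$ subordinate to an open cover containing $\mathcal{S}$. Consequently $\rho_{\mathcal{F},\Phi,V}|_A=\rho_{\mathcal{S},\Phi|_A,V}$, whence $f_{\mathcal{F},\Phi,V,p}|_A$ realizes the class $h(\mathcal{S},V,p)$, which is non-zero by hypothesis. Thus $f_{\mathcal{F},\Phi,V,p}:X\to{\Bbb S}^n$ is a continuous extension to $X$ of a non-nullhomotopic map $A\to{\Bbb S}^n$, contradicting $(X,A)\in\ep_n$.

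The only real subtlety I expect is the interpretation of $\ep_n$: its definition merely asserts that \emph{some} non-nullhomotopic map $A\to{\Bbb S}^n$ fails to extend, whereas the argument above needs this for the specific class $h(\mathcal{S},V,p)$. The same issue appears implicitly in the proof of Theorem \ref{th31}; in both theorems it is benign because the concrete sources of $\ep_n$ pairs supplied by Theorem \ref{th23} (null-homotopic inclusion, or $A=\partial X$ with $X$ an oriented $(n+1)$-manifold) satisfy the stronger property that \emph{no} non-nullhomotopic map extends. Beyond this, the closed-cover bookkeeping via Lemma \ref{prop3} is routine.
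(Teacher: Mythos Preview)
Your argument is essentially identical to the paper's own proof, which also proceeds by contradiction: if no such $J$ exists then $p\notin\rho_{\mathcal F,V}(X)$, so $f_{\mathcal F,V,p}:X\to{\Bbb S}^n$ is defined and extends $f_{\mathcal S,V,p}$, contradicting $(X,A)\in\ep_n$. Your observation about the definition of $\ep_n$ is well taken and applies equally to the paper's proof; as you note, in the applications furnished by Theorem~\ref{th23} the stronger property (no non-nullhomotopic map extends) holds, so the argument goes through in those cases.
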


\begin{proof}
Assume the converse. 
Then $p\in{\Bbb R}^{n+1}\setminus\rho_{\mathcal F,V}(X)$. 
Therefore, $f_{\mathcal F,V,p}:X\to {\Bbb S}^n$ is well defined. 
On the other side, it is an extension of the map $f_{\mathcal S,V,p}:A\to {\Bbb S}^n$ with $[f_{\mathcal S,V,p}]\ne0$, which is a contradiction.  	
\end{proof}

Theorem \ref{th1} implies a generalization of Sperner's lemma: 

\begin{theorem} 
\label{thSp}  
Let $X=|K|$ and $A=|Q|$, where $K$ is a simplicial complex and $Q$ is a subcomplex of $K$. 
Suppose $(X,A)\in\ep_n$. Let $L:\vrt(K)\to\{0,1,\ldots,m\}$ be a labeling of $K$.   
Let $V:=\{v_0,\ldots, v_m\}$ and $p$ be points in ${\Bbb R}^{n+1}$. Suppose there are no simplices in $Q$ whose vertices are labeled by $J\in\cov_V(p)$.  
Let 
$$h(Q,L,V,p)\ne0 \mbox{ in } [|Q|,{\Bbb S}^n]. $$ 
Then there is a simplex $s$ in $K$ and there is $J\in\cov_V(p)$ such that vertices of $s$ have labels $J$.  

If $m=n+1$ and $[L]\ne0$ in $[|Q|,{\Bbb S}^n]$, then there is a simplex in $K$ that has all the labels in $0,\ldots,n+1$. 
\end{theorem}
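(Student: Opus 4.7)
The aim is to reduce Theorem \ref{thSp} to Theorem \ref{th1}, applied to the canonical covers of $|K|$ and $|Q|$ coming from the labeling $L$.

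First I form $\mathcal F := \mathcal U_L(K)$ and $\mathcal S := \mathcal U_L(Q)$. Because $Q$ is a subcomplex of $K$, a point in the open star $\st(u)$ of a vertex $u \notin \vrt(Q)$ cannot lie in $|Q|$, and for $u \in \vrt(Q)$ one has $\st_K(u) \cap |Q| = \st_Q(u)$. Consequently $U_i(K) \cap |Q| = U_i(Q)$ for every $i$, so $\mathcal F$ extends $\mathcal S$ in the sense of Definition 2.1. The hypothesis that no simplex of $Q$ carries a label set $J \in \cov_V(p)$ is precisely the condition that $\bigcap_{j \in J} U_j(Q) = \emptyset$ for every $J \in \cov_V(p)$, which by the preceding proposition is equivalent to $p \in P_{\mathcal S, V}(|Q|)$. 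Hence $h(\mathcal S, V, p) = h(Q, L, V, p)$ is well defined and, by assumption, non-zero in $[|Q|, {\Bbb S}^n]$.

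With every hypothesis of Theorem \ref{th1} now met, its conclusion yields a $J \in \cov_V(p)$ with $\bigcap_{j \in J} U_j(K) \neq \emptyset$. Pick $x$ in that intersection: for each $j \in J$ some vertex $u_j$ of $K$ with $L(u_j) = j$ satisfies $x \in \st(u_j)$, so the carrier simplex $s$ of $x$ in $K$ contains a vertex of every label in $J$. The label set $J'$ of $s$ therefore contains $J$, and since $\conv\{v_j : j \in J\} \subseteq \conv\{v_j : j \in J'\}$ the family $\cov_V(p)$ is upward closed, whence $J' \in \cov_V(p)$. Replacing $J$ by $J'$ yields a simplex of $K$ whose label set itself lies in $\cov_V(p)$, which is the first assertion.

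For the second assertion, take $V := \vrt(\Delta^{n+1})$ and any $p$ in the interior of $\Delta^{n+1}$. Then $\cov_V(p)$ has the single element $\{0, 1, \ldots, n+1\}$. Radial projection from $p$ realises a homotopy from $f_{\mathcal U_L(Q), V, p}$ to the canonical map $\rho_{\mathcal U_L(Q)} : |Q| \to \partial \Delta^{n+1}$, whose class the remark after Definition 1.2 identifies with $[L]$; therefore $h(Q, L, V, p) = [L] \neq 0$. Applying the first part produces a simplex of $K$ whose label set lies in $\cov_V(p)$, that is, a simplex with all labels $0, 1, \ldots, n+1$. The only non-mechanical point in the whole argument is recognising the upward closure of $\cov_V(p)$; everything else is a direct appeal to Theorem \ref{th1}.
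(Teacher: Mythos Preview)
Your proof is correct and follows exactly the approach the paper intends: the paper gives no explicit proof of Theorem \ref{thSp}, merely stating that ``Theorem \ref{th1} implies'' it, and you have carefully supplied the translation between labelings and canonical covers $\mathcal U_L$ that makes this implication precise. Your observation about the upward closure of $\cov_V(p)$ and the identification $h(Q,L,V,p)=[L]$ via radial projection in the case $m=n+1$ are the right details to check, and both are handled correctly.
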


There are many generalizations of the KKM and Sperner lemmas, see \cite{Bacon,DeLPS,KyFan,Meun,Mus,MusSpT,MusQ,MusS,MusVo,Tucker}. 
Part of them follow from Theorems \ref{th1} and \ref{thSp}. 
As example,  we consider here an extension of Tucker--Bacon lemma \cite{Bacon,Tucker} and \cite{MusVo}. 

\begin{cor} 
Let $(X,A)\in\ep_n$. 
Let   $\mathcal F=\{F_1,F_{-1}\ldots,F_n,F_{-n}\}$ be a  cover of $X$ that extends a cover $\mathcal S$ of $A$.
Suppose $S_i\cap S_{-i}=\emptyset$ for all $i$ and $h(\mathcal S,V,O)\ne0$ in $[A,{\Bbb S}^{n-1}]$, where $V:=\{\pm e_1,\ldots,\pm e_n\}$, $e_1,\ldots,e_n$ is an orthonormal basis and O is the origin in ${\Bbb R}^n$. 
Then there is $i$ such that the intersection of $F_i$ and $F_{-i}$ is not empty.
\end{cor}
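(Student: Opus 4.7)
The plan is to invoke Theorem \ref{th1} with $V = \{\pm e_1, \ldots, \pm e_n\}$ and $p = O$, after first identifying $\cov_V(O)$ explicitly. The claim is that $\cov_V(O)$ consists precisely of the index sets $J \subset \{\pm 1, \ldots, \pm n\}$ that contain at least one antipodal pair $\{i, -i\}$. The ``if'' direction is the identity $O = \tfrac{1}{2} e_i + \tfrac{1}{2}(-e_i)$, which exhibits $O$ as a convex combination of $\{v_j : j \in J\}$ as soon as $\{i,-i\} \subset J$. For ``only if'', if $J$ meets each pair $\{i,-i\}$ in at most one element, then $\{v_j : j \in J\}$ is a set of signed standard basis vectors with no cancellations, hence linearly independent; no convex combination of linearly independent vectors with coefficients summing to $1$ can equal $O$, so $O \notin \conv\{v_j : j \in J\}$.

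With this characterization in hand, the assumption $S_i \cap S_{-i} = \emptyset$ for all $i$ immediately forces $\bigcap_{j \in J} S_j = \emptyset$ for every $J \in \cov_V(O)$, since each such $J$ contains some $\{i,-i\}$. Combined with $h(\mathcal{S}, V, O) \neq 0$ and the hypothesis $(X,A) \in \ep_n$ (matched to the target sphere ${\Bbb S}^{n-1}$, so that Theorem \ref{th1} is applied with its ambient dimension shifted appropriately), Theorem \ref{th1} yields some $J^* \in \cov_V(O)$ with $\bigcap_{j \in J^*} F_j \neq \emptyset$. Since $J^*$ necessarily contains an antipodal pair $\{i,-i\}$, we conclude
$$
F_i \cap F_{-i} \;\supseteq\; \bigcap_{j \in J^*} F_j \;\neq\; \emptyset,
$$
which is the claimed conclusion.

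The only genuinely nontrivial step is the elementary linear-algebra identification of $\cov_V(O)$ for the cross-polytope vertex set; after that, the result is a direct translation of Theorem \ref{th1}'s conclusion into the antipodal language of the Tucker--Bacon setting. I anticipate no real obstacle: the bulk of the work has already been done in Theorem \ref{th1}, and this corollary simply specializes it to the vertex configuration $V = \{\pm e_i\}$ with $p$ at the origin, where ``covering $p$'' is synonymous with ``containing an antipodal pair''.
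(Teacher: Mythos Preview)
Your proof is correct and follows exactly the paper's approach: identify $\cov_V(O)$ for the cross-polytope vertex set and then invoke Theorem~\ref{th1}. The paper's one-line proof describes $\cov_V(O)$ tersely as the antipodal edges $\{e_i,-e_i\}$, whereas your fuller characterization (all $J$ containing at least one antipodal pair) is what the definition literally gives; either description suffices, and your handling of the index shift between $\ep_n$ and ${\Bbb S}^{n-1}$ is appropriate.
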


\begin{proof} 
Note that $\cov_V(O)$ consists of edges that join $e_i$ and $(-e_{i})$. 
Then Theorem \ref{th1} yields a proof. 
\end{proof}


Consider the case when $X=M$ is an oriented manifold of dimension $n+1$ and $A=\partial M$. 
Then $[A,{\Bbb S}^n]={\Bbb Z}$ and for any continuous $f:A\to{\Bbb S}^n$ we have $[f]=\deg{f}$. 
Now we show that it can improve Theorem \ref{th31}. 

Let $s$ be a $d$-simplex. 
We say that $s$ is {\it fully labelled (or colored)} if vertices of $s$ are labeled (colored) by distinct labels $\ell_0,\ldots,\ell_d$.

Let  $T$ be a triangulation of $M$. 
Let  $L:\vrt(M)\to\{0,1,\ldots,n+1\}$ be a labeling of vertices. 
Let $\partial T$ denote the triangulation $T$ on $\partial M$. 
We denote by $\deg(L,\partial T)$ the class $[\partial T,L]$ in $[\partial M,{\Bbb S}^n]$.  

\begin{theorem} 
\label{th34} 
Let $T$ be a triangulation of a manifold $M$ of dimension $n$ with boundary. 
Then for a labeling $L:\vrt(T)\to \{0,1,\ldots,n\}$ the triangulation $T$ must  contain at least $|\deg(L,\partial T)|$ fully colored simplices. 
\end{theorem}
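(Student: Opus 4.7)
The plan is to interpret $\deg(L,\partial T)$ as a signed count of preimages of a generic interior point of $|\Delta^n|$ under the simplicial map $f_L:|T|\to|\Delta^n|$, and to observe that these preimages correspond bijectively to the fully colored top-dimensional simplices of $T$; the desired bound is then immediate from comparing a signed and an unsigned count.

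First I would note that because $\partial M$ has dimension $n-1$, no simplex of the induced boundary triangulation $\partial T$ can carry all $n+1$ labels, so the restriction $f_L|_{\partial M}$ lands in $\partial|\Delta^n|\cong{\Bbb S}^{n-1}$; by the conventions of Section~1 its homotopy class is precisely $\deg(L,\partial T)\in{\Bbb Z}$. Next I would choose a point $p$ in the interior of $|\Delta^n|$ lying off the image of the $(n-1)$-skeleton of $T$ under $f_L$ (a generic choice suffices). Since $f_L$ is affine on each simplex, a preimage of $p$ can only lie in the interior of a top-dimensional simplex $\sigma$ whose image under $f_L$ is all of $|\Delta^n|$, that is, precisely a fully colored $n$-simplex. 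Each such $\sigma$ contributes exactly one preimage, at which $f_L$ is a local homeomorphism of local degree $\epsilon_\sigma\in\{+1,-1\}$ recording whether $f_L|_\sigma$ preserves or reverses the orientation of $\sigma$.

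The decisive step is cobordism invariance of mapping degree. Set $g(x):=(f_L(x)-p)/\|f_L(x)-p\|$ on $|T|\setminus f_L^{-1}(p)$; this is a continuous map to ${\Bbb S}^{n-1}$. The restriction $g|_{\partial M}$ is homotopic to $f_L|_{\partial M}$ viewed as a map into ${\Bbb S}^{n-1}$, because both arise from $f_L|_{\partial M}$ through radial identifications $\partial|\Delta^n|\to{\Bbb S}^{n-1}$ that are isotopic through homeomorphisms; hence $\deg(g|_{\partial M})=\deg(L,\partial T)$. Excising a small open ball $B_\sigma$ around each point of $f_L^{-1}(p)$ produces an oriented $n$-dimensional cobordism whose boundary is $\partial M\sqcup\bigsqcup_\sigma(-\partial B_\sigma)$ and on which $g$ extends continuously. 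Cobordism invariance then yields
$$\deg(L,\partial T)=\deg(g|_{\partial M})=\sum_{\sigma\text{ fully colored}}\epsilon_\sigma,$$
so $|\deg(L,\partial T)|\le\sum_\sigma|\epsilon_\sigma|$, which is the number of fully colored simplices of $T$.

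The main obstacle, though more a matter of careful bookkeeping than substance, is aligning orientation conventions: one must verify that the sign $\epsilon_\sigma$ coming from the local degree of $f_L$ at the preimage in $\sigma$ is the same sign attached to $\sigma$ in the simplicial computation of $\deg(L,\partial T)$. This reduces to checking that the two natural identifications $\partial|\Delta^n|\cong{\Bbb S}^{n-1}$ (the one implicit in Definition~1.1 and the radial projection from $p$) are homotopic through homeomorphisms, which is clear because both may be realized as radial projections from interior points of $|\Delta^n|$ and the set of such interior points is connected.
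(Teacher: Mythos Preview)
Your proof is correct and follows essentially the same approach as the paper's: both use the simplicial map $f_L:|T|\to|\Delta^n|$, observe that the preimages of an interior point of $|\Delta^n|$ lie exactly in the fully colored $n$-simplices, and identify the signed count of these preimages with $\deg(L,\partial T)$. The paper simply asserts this last identification as a standard degree-theory fact, whereas you unfold it via the explicit radial-projection and cobordism/excision argument; the content is the same.
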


\begin{proof} 
Actually, $L$ implies a piecewise linear map $f_L:T\to\Delta^{n}$, where $f_L=\rho_{\mathcal U_L(T)}$ and $\deg{f_L}=\deg(L,\partial T)$. 
Then any internal point $y$ in $\Delta^{n}$ is regular for $f_L$, the set of preimages $f_{L}^{-1}(y)$ consists of points $u_k\in M$ such that every $u_k$ lies inside of some fully labelled $(n)$-simplex $t_k\in T$, and the sum of the signs of $u_k$ is equal to $\deg{f_L}$. 
It proves the theorem. 
\end{proof}

Let $P$ be a convex polytope in ${\Bbb R}^d$ with vertices $\{v_1,\ldots,v_m\}$. 
Let $T$ be a triangulation of a manifold $M$ of dimension $d$. 
Let  $L:\vrt(T)\to\{1,2,\ldots,m\}$ be a labeling of $T$. 
If, for $u\in \vrt(T)$, we have $L(u)=i$, then we set  $f_{L,P}(u) :=v_i$. 
Therefore, $f_{L,P}$ is defined for all vertices of $T$, and it uniquely defines a simplicial (piecewise linear) map $f_{L,P}:M\to {\Bbb R}^d$.

The following theorem extends Theorems \ref{thSp}, \ref{th34} and the De Loera - Petersen - Su theorem \cite{DeLPS}.

\begin{theorem} 
\label{SpM} 
Let $P$ be a convex polytope in ${\Bbb R}^d$ with $m$ vertices. 
Let  $T$ be a triangulation of an oriented manifold $M$ of dimension $d$ with boundary. 
Let $L:\vrt(T)\to\{1,2,\ldots,m\}$ be a labeling such that $f_{L,P}(\partial M)\subseteq \partial P$. 
Then  $T$ contains  at least $(m-d)|\deg(L,\partial T)|$ fully labelled $d$-simplices.
\end{theorem}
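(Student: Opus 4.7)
The plan is to apply Theorem \ref{th34} once per $d$-simplex of a carefully chosen triangulation of the target polytope $P$, and then to combine the resulting lower bounds into a single disjoint-family estimate.

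Let $g:=f_{L,P}:M\to\Bbb R^d$. By convexity $g(M)\subseteq P$, and by hypothesis $g(\partial M)\subseteq\partial P\cong\Bbb S^{d-1}$, so $g$ is a PL map of pairs $(M,\partial M)\to(P,\partial P)$ with boundary degree $D:=\deg(L,\partial T)$. Fix a triangulation $\mathcal{T}=\{\tau_1,\ldots,\tau_N\}$ of $P$ using only the vertices $V=\{v_1,\ldots,v_m\}$; the classical lower bound for vertex triangulations of a convex $d$-polytope gives $N\geq m-d$, which is the source of the factor in the conclusion.

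For each $\tau_j=[v_{j_0},\ldots,v_{j_d}]$ I would construct a PL retraction $\Pi_j:P\to\Delta^d$ that sends $v_{j_k}\mapsto e_k$ for $k=0,\ldots,d$ and sends every remaining vertex of $P$ to one chosen vertex $e_{k(j)}$ of $\Delta^d$, extending affinely on simplices of $\mathcal{T}$ (after possibly subdividing $\mathcal{T}$ so that this retraction has degree $\pm 1$ as a map of pairs). The resulting composite labeling $L_j:=\pi_j\circ L:\vrt(T)\to\{0,\ldots,d\}$ then satisfies $|\deg(L_j,\partial T)|=|D|$, so Theorem \ref{th34} produces at least $|D|$ simplices of $T$ that are fully $L_j$-labelled; under the original labeling $L$, every such simplex has label set of the form $R_j\cup\{v\}$, where $R_j:=V(\tau_j)\setminus\{v_{j_{k(j)}}\}$ is a fixed $d$-subset of $V$ and $v\in V\setminus R_j$. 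Write $A_j$ for this family of simplices, so that $|A_j|\geq|D|$.

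The crux is to arrange the collapse vertices $v_{j_{k(j)}}$ (and, if necessary, the choice of $\mathcal{T}$) so that the $d$-subsets $R_1,\ldots,R_N$ pack inside $V$: no $(d+1)$-subset of $V$ contains two different $R_j$'s, equivalently $|R_j\cap R_{j'}|\leq d-2$ for all $j\neq j'$. Under the packing the families $A_j$ are pairwise disjoint subsets of the fully $L$-labelled simplices of $T$, and summing produces
$$F\;\geq\;\sum_{j=1}^N|A_j|\;\geq\;N|D|\;\geq\;(m-d)|D|,$$
which is the desired bound. The hardest part is verifying the packing and the degree-$\pm 1$ property of each $\Pi_j$ simultaneously; I expect this to work with a placing (pulling) triangulation of $P$, where $\tau_j$ is obtained by coning the most recently placed vertex $v_{j_{k(j)}}$ over a facet $F_j$ formed from strictly earlier vertices, and then taking $R_j:=V(F_j)$. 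The chronological ordering then provides enough rigidity for the intersection bound, and internal simplices of $\mathcal{T}$ whose facets are all shared will be handled via a finer subdivision of $\mathcal{T}$ through the barycenter of $\tau_j$ together with a local degree calculation for the resulting PL retraction.
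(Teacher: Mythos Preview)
The packing step at the end is where the argument breaks, and it is not a matter of choosing the triangulation more carefully: already for a pentagon ($d=2$, $m=5$) your scheme is infeasible. Any triangulation of the pentagon on its five vertices has exactly $m-d=3$ triangles, and for each $\tau_j$ your set $R_j$ is a $2$-element subset (an edge of $\tau_j$). Your disjointness criterion $|R_j\cap R_{j'}|\le d-2=0$ asks for three pairwise vertex-disjoint edges inside a $5$-element vertex set, which is impossible. More generally, the sets $R_j$ you produce are facets of the $\tau_j$, and adjacent cells of any triangulation share a facet, so neighboring $R_j$'s will typically meet in $d-1$ vertices, violating the packing. No choice of collapse vertex or subdivision of $\mathcal T$ can repair this, because the obstruction is purely combinatorial in $V$.

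The paper sidesteps exactly this difficulty by invoking the \emph{Pebble Set Theorem} of De~Loera--Peterson--Su: one picks $m-d$ interior points $p_1,\ldots,p_{m-d}$ of $P$ so that no $d$-simplex on $V$ contains two of them (equivalently $\cov_V(p_i)\cap\cov_V(p_j)=\emptyset$ for $i\ne j$). For each pebble $p$ the preimage argument of Theorem~\ref{th34} gives at least $|\deg(L,\partial T)|$ fully labelled $d$-simplices whose label-simplex contains $p$, and the pebble condition makes these $m-d$ families automatically disjoint. So the ``packing'' is done on the target side by placing points, not by trying to separate $d$-subsets of $V$; that is the missing idea in your proposal. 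If you want to salvage your outline, the natural fix is to replace the retraction/$R_j$ mechanism by this pebble-point counting.
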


\begin{proof} 
Consider a set of points $S$ in the interior of $P$ so that the interior of every simplex determined by $d+1$ vertices in $V:=\vrt(P)$ contains a unique point from $S$. 
In other words, for any two distinct points $x$ and $y$ in $S$ the intersection of the sets $\cov_V(x)$ and $\cov_V(y)$ is empty.  
Such sets have been called {\it pebble sets} by De Loera, Peterson, and Su. 
In \cite{DeLPS} they proved that in $P$ there is a pebble set of cardinality at least $m-d$. 
Note that $\deg(f_{L,P})=h(\partial T,L,V,p)$ for any internal point $p$ in $P$. 
Let us apply Theorem \ref{thSp} for all points $p$ in $S$. 
Using the same argument about the number of preimages of $f^{-1}_{L,P}(p)$ as in Theorem \ref{th34} we prove the theorem. 
\end{proof}

We conclude this paper by an extension of Theorem \ref{SpM} for simplicial complexes.   
Let  $K$ be a $d$-dimensional simplicial complex. 
E. D. Bloch  \cite{Bloch} defines  the ``boundary'' of $K$, denoted $\bd K$, as the collection of all $(d-1)$-simplices of $K$ that are contained in an odd number of $d$-simplices, together with all the faces of these $(d-1)$-simplices.

Let $P$ be a convex polytope in ${\Bbb R}^d$ with $m$ vertices.  
Any labeling $L:\vrt(K)\to\{1,2,\ldots,m\}$ defines a simplicial map $f_{L,P}:|K|\to P\subset {\Bbb R}^d$. 
So we have a map $f_{L,P}|_{|\bd K|}:\bd(X)\to \partial P\simeq{\Bbb S}^{d-1}$. 
Let us denote the degree of this map modulo 2 by $\dg2(L,\bd K)$. 
From \cite[Theorem 1.5]{Bloch} it follows that the cardinality of $f^{-1}_{L,P}(p)$, where $p$ lies inside of $P$,   is equal to $\dg2(L,\bd K)$  modulo 2. 
Then the Pebble Set Theorem \cite{DeLPS} yields the following theorem.  

\begin{theorem} 
\label{SpB} 
Let $P$ be a convex polytope in ${\Bbb R}^d$ with $m$ vertices. 
Let  $T$ be a triangulation of a simplicial complex $X$ of dimension $d$. 
Let $L:\vrt(T)\to\{1,2,\ldots,m\}$ be a labeling such that $f_{L,P}(|\bd T|)\subseteq \partial P$. If $\dg2(L,\bd T)$ is odd, then $T$ contains  at least $(m-d)$ fully labelled $d$-simplices. 
\end{theorem}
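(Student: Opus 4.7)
The plan is to run the strategy of Theorem~\ref{SpM} but with Bloch's mod~$2$ degree replacing the ordinary integer degree. First, invoke the Pebble Set Theorem of De Loera--Petersen--Su to produce a pebble set $S\subset\mathrm{int}(P)$ with $|S|\ge m-d$. After a generic perturbation of each pebble within the open $d$-simplex of $V:=\vrt(P)$ in which it sits (the pebble-set condition $\cov_V(p)\cap\cov_V(q)=\emptyset$ is preserved because it depends only on membership in the open simplices spanned by subsets of $V$), I may assume that each $p\in S$ is a regular value of the piecewise linear map $f_{L,P}:|T|\to P$.

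Next, for each pebble $p\in S$, apply Bloch's Theorem~1.5 (cited just before the statement): the cardinality $|f_{L,P}^{-1}(p)|$ has the same parity as $\dg2(L,\bd T)$, which is odd by hypothesis, so the preimage is non-empty. Any preimage point lies in the interior of some $d$-simplex $t\in T$, and since $f_{L,P}$ sends every $d$-simplex whose vertex labels are not all distinct into a face of $P$ of dimension at most $d-1$ (which cannot contain an interior regular value of $P$), such a $t$ must carry $d+1$ distinct labels. Thus $t$ is fully labelled, with label set $J(t)\in\cov_V(p)$.

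Finally, assemble the count using the disjointness built into the pebble set. For distinct $p,q\in S$ one has $\cov_V(p)\cap\cov_V(q)=\emptyset$, so no fully labelled $d$-simplex $t\in T$ can have its label set $J(t)$ lie in both $\cov_V(p)$ and $\cov_V(q)$. Hence the fully labelled $d$-simplices obtained from different pebbles are automatically distinct, and summing over $S$ produces at least $|S|\ge m-d$ fully labelled $d$-simplices, which is the desired bound.

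The main obstacle I expect is the regularity step: Bloch's parity formula is valid only at a regular value of $f_{L,P}$, so one has to justify that the small perturbation needed to make every pebble simultaneously a regular value can be carried out without destroying the pebble-set property. Because the pebble-set condition is open (it asks only that each $p$ avoid the boundaries of the simplices spanned by $V$) and the non-regular values of the piecewise linear map $f_{L,P}$ form a finite union of affine subspaces of dimension $<d$, a generic small perturbation achieves both conditions at once; once this is in place, the argument is a direct packaging of Bloch's parity count with the disjointness inherent in the pebble set, entirely parallel to the proof of Theorem~\ref{SpM}.
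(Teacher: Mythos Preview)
Your proposal is correct and follows essentially the same approach as the paper: the paper's proof is just the sentence preceding the theorem, invoking Bloch's mod~$2$ preimage count together with the Pebble Set Theorem, and you have simply fleshed out those two ingredients (including the regularity/perturbation point, which the paper leaves implicit).
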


\begin{cor} 
Let $T$ be  a triangulation of a simplicial complex $X$ of dimension $d$. 
If $\dg2(L,\bd T)$ for a labeling $L:\vrt(T)\to \{1,2,\ldots,d+1\}$ is odd, then $T$  must  contain at least one fully colored $d$-simplex. 
\end{cor}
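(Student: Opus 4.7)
The plan is to specialize Theorem \ref{SpB} to the case $P=\Delta^d$, the standard $d$-dimensional simplex in ${\Bbb R}^d$, whose vertex set has cardinality $m=d+1$. This is the smallest $m$ for which Theorem \ref{SpB} gives a non-vacuous lower bound, and here the bound $(m-d)$ becomes exactly $1$, matching the statement of the corollary.

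First, I would check that the hypothesis $f_{L,P}(|\bd T|)\subseteq\partial P$ of Theorem \ref{SpB} is automatic in this setting. Since $\bd T$ has dimension at most $d-1$, every simplex of $\bd T$ has at most $d$ vertices, so its image under $f_{L,P}$ is a subsimplex of $\Delta^d$ spanned by at most $d$ of the vertices $v_i$. Such a subsimplex lies in a proper face of $\Delta^d$, which is contained in $\partial\Delta^d=\partial P$. Hence no additional restriction on $L$ is required beyond what the corollary supplies, and $\dg2(L,\bd T)$ is well defined as the mod-$2$ degree of a map $|\bd T|\to\partial\Delta^d\simeq{\Bbb S}^{d-1}$.

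Next, applying Theorem \ref{SpB} with this $P$ and $m=d+1$: the assumption that $\dg2(L,\bd T)$ is odd delivers at least $m-d=1$ fully labelled $d$-simplex of $T$, which is exactly the conclusion of the corollary.

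There is essentially no obstacle here: the corollary is the $m=d+1$ edge case of Theorem \ref{SpB}. The only sanity check worth performing is that for $P=\Delta^d$ the pebble set of De Loera--Petersen--Su degenerates to a single interior point $p$, since the only $J\subseteq V$ whose convex hull contains an interior point of $\Delta^d$ is $J=V$ itself. Consequently $\dg2(L,\bd T)\equiv|f^{-1}_{L,P}(p)|\pmod 2$ already counts fully labelled $d$-simplices of $T$, and in this specific case the conclusion follows directly from Bloch's theorem without invoking the Pebble Set Theorem.
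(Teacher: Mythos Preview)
Your argument is correct and is exactly the specialization the paper intends: the corollary is stated immediately after Theorem~\ref{SpB} with no separate proof, so the paper treats it as the case $P=\Delta^d$, $m=d+1$, giving the bound $m-d=1$. Your additional remark that for $P=\Delta^d$ a single interior pebble suffices (so Bloch's theorem already yields the conclusion without the full Pebble Set Theorem) is a valid observation but goes slightly beyond what the paper makes explicit.
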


The KKM lemma and its relatives have many applications in several fields of pure and applied mathematics. 
In \cite{MusFD} we consider some extensions of results of this paper that can be applied in game theory and mathematical economics. 

\medskip

\noindent{\bf Acknowledgement.} I  wish to thank Alexander Dranishnikov, James Maissen and Alexey Volovikov for helpful discussions and comments. 

 \medskip

\noindent Oleg R. Musin\\ 
University of Texas Rio Grande Valley, One West University Boulevard, Brownsville, TX, 78520 \\
and\\
IITP RAS, Bolshoy Karetny per. 19, Moscow, 127994, Russia\\
{\it E-mail address:} oleg.musin@utrgv.edu

\end{document}